\definecolor{mygray}{gray}{0.3} 
\newtheorem{theorem}{Theorem}
\newtheorem{proposition}[theorem]{Proposition}
\newtheorem{lemma}[theorem]{Lemma}
\newtheorem{claim}{Claim}
\newtheorem*{rmk}{Remark}
\newtheorem{proofc}{Proof of Claim}
\newcommand{\de}{\delta}
\newcommand{\eps}{\epsilon}
\newcommand{\la}{\lambda}
\renewcommand{\phi}{\varphi}
\newcommand{\N}{\mathds{N}}
\newcommand{\R}{\mathds{R}}
\newcommand{\C}{\mathds{C}}
\newcommand{\D}{\mathds{D}}
\newcommand{\PP}{\mathbb{P}}
\newcommand{\E}{\mathbb{E}}
\newcommand{\CUE}{\mathrm{CUE}}
\newcommand{\Sp}{\mathrm{Sp}}
\newcommand{\tr}{\mathrm{tr}}
\newcommand{\lv}{\lvert}
\newcommand{\re}{\mathrm{e}}
\newcommand{\ri}{\mathrm{i}}
\newcommand{\diag}{\mathrm{diag}}
\title[Zeros Of Random Analytic Functions And Spectral Properties Of Perturbed Unitary Matrices ]{Zeros Of Random Analytic Functions And Spectral \\ 
Properties Of Perturbed Unitary Matrices}
\author{Aniss Fares}
\address{CMLS (École Polytechnique), 91120 Palaiseau, France}
\email{aniss.fares@polytechnique.edu}
\begin{document}

\begin{abstract}
    We study the spectral properties of a rank-one multiplicative perturbation of a unitary matrix, a model introduced by Fyodorov. Building upon earlier results by Forrester and Ipsen, we provide a direct proof that the eigenvalues converge to the zeros of a specific Gaussian analytic function. Our approach extends these results to other unitarily invariant models. This method enables us to address a question raised by Dubach and Reker concerning the critical timescale at which an outlier emerges.
\end{abstract}
\maketitle

\section*{Introduction}

The study of the interplay between random matrices and the zeros of random analytic functions has revealed deep and surprising connections, as beautifully illustrated in the monograph \cite{GAF}. In a variety of contexts, the spectral behavior of random matrices has been shown to exhibit striking links with the zeros set of certain Gaussian analytic functions, suggesting a unifying structure behind seemingly distinct probabilistic objects. \\
\\
A recent and particularly elegant manifestation of this phenomenon is due to Forrester and Ipsen \cite{ForIps}, who considered a multiplicatively perturbed Haar matrix, a model originally introduced by Fyodorov in \cite{Fyo}. They showed that, in a critical scaling regime, the eigenvalues of this model converge to the zeros of a specific Gaussian analytic function. \\
\\ 
Interestingly, the same Gaussian analytic function (with a change of variable $z\mapsto 1/z$) also arises in the context of rank-one additive perturbations of Hermitian models as shown in \cite{Tao}. The statistical properties of this function have been the subject of further investigation in \cite{FKP}.\\
\\
The model considered in \cite{ForIps} has its roots in physics, and more specifically in scattering theory, where similar structures appear in the study of open quantum systems. A more detailed exposition of this physical background can be found in \cite{Sav}.\\ 
\\
More recently, this same model has been explored from a dynamical point of view by Dubach and Reker \cite{DubRek}, who investigated the emergence of an outlier.\ They showed that a "strongly separated outlier" can be observed up to a critical time, and that this time is optimal in the case of Haar matrices. \\
\\
The present work serves two complementary purposes. First, we revisit the analysis of Forrester and Ipsen, offering an alternative approach that naturally extends to a broader class of unitary matrices. This new perspective allows us to generalize their convergence results beyond the Haar case. Furthermore, we obtain an extension of the critical time optimality result of Dubach and Reker to non-Haar settings. While their proof relied heavily on the integrability of the $\CUE$, our approach reveals a structural robustness of the phenomenon, valid for a much wider class of random unitary matrices. 

\subsection*{Definitions and main results.}
Throughout the paper $\D$ denotes the open unit disk. Let $H(\D)$ denote the set of analytic functions on $\D$. It is a Polish space when endowed with the topology of uniform convergence on compact subsets. We equip this topological space with its Borel $\sigma$-algebra. We define a \textit{random analytic function} as a random variable taking values in $H(\D)$.\\

A positive Borel measure on $\D$ is said to be a \textit{Radon measure} if it assigns a finite mass to every compact set $K$ in $\D$. For an analytic function $f$ we denote by $\mathcal{Z}_f$ its zero set. For an analytic function 
$f$ that is
not identically zero, the \textit{empirical measure of}$f$ is defined as 
\begin{equation*}
    \sum_{w\in \mathcal{Z}_f}\de_w,
\end{equation*}
where the zeros are repeated with multiplicities. The identity theorem implies that an analytic function that is not identically zero must have a finite number of zeros in every compact $K$ of $\D$. Thus, its empirical measure is a Radon measure.\\
Let  $\mathcal{Q}$ denote the space of integer-valued Radon measures. Endow $\mathcal{Q}$ with the smallest topology that makes the maps 

\begin{equation*}
    \mu \in \mathcal{Q} \mapsto \int_{\D}\phi(z)d\mu(z):=\mu(\phi)
\end{equation*}
continuous, for all continuous compactly supported functions $\phi$ with support in $\D$. With this topology $\mathcal{Q}$ is a Polish space (see \cite{Olav}, chapter 4). A random variable taking values in $\mathcal{Q}$ is called a \textit{point process}. If a random analytic function is almost surely not identically zero, then its empirical measure is a point process.\\

We say that a sequence of point processes $\{\mu_N\}$ \textit{converges vaguely} to a point process $\mu$ if 
\begin{equation*}
    \mu_N(\phi) \xrightarrow[N \to \infty]{d}\mu(\phi)
\end{equation*}
for all $\phi \in \mathscr{C}_c(\D) $. We denote this convergence by $\mu_N \xrightarrow[N \to \infty]{v} \mu$. The topology defined on $\mathcal{Q}$ above coincides with the topology associated with vague convergence, and we refer to it as the vague topology (see \cite{Olav}, chapter 4). \\

We can now state the following result of Forrester--Ipsen in \cite{ForIps}, for which we provide a new proof. 
\begin{theorem}\label{Forrester-Ipsen result}
    Let $\{c_k\}$ be a sequence of complex independent standard Gaussian random variables.
    Let $U$ be an $N \times N$ Haar-distributed matrix and $A=\diag(aN^{-1/2},1,\ldots,1)$ with $a \in \C^{*}$. Then 
    \begin{equation*}
        \sum_{\la \in \Sp(UA)}\de_{\la}\xrightarrow[N \to \infty]{v}\sum_{w\in \mathcal{Z}_{\phi_a}}\de_w
    \end{equation*}
    with 
    \begin{equation*}
        \phi_a(z):= a - \sum_{k \geq 1}c_k z^k, \ \ z\in \D.
    \end{equation*}
    That is, the point process formed by the eigenvalues of $UA$ converges vaguely to the point process formed by the zeros of $\phi_a$.
\end{theorem}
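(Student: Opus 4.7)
The plan is to write down an explicit random analytic function $F_N$ whose zeros in $\D$ are precisely the eigenvalues of $UA$ that lie in $\D$, show that $F_N$ converges in distribution to $\phi_a$ in $H(\D)$, and then transfer this convergence from functions to their zero sets. Decomposing $A = I + (aN^{-1/2}-1)\,e_1 e_1^*$ and applying the matrix determinant lemma to $zU^* - A = (zU^* - I) + (1 - aN^{-1/2})\,e_1 e_1^*$ yields
\begin{equation*}
\det(zU^* - A) = \det(zU^* - I)\Bigl(1 + (1 - aN^{-1/2})\,e_1^*(zU^* - I)^{-1} e_1\Bigr).
\end{equation*}
Since $U$ is unitary its spectrum lies on $\partial \D$, so the first factor does not vanish on $\D$. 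Expanding the Neumann series $(I - zU^*)^{-1} = \sum_{k\geq 0} z^k (U^*)^k$ and rearranging, the eigenvalues of $UA$ in $\D$ are exactly the zeros of
\begin{equation*}
F_N(z) := a - (\sqrt{N} - a) \sum_{k \geq 1} \overline{(U^k)_{11}}\, z^k, \quad z \in \D.
\end{equation*}

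The next step is to prove that $F_N \distconv \phi_a$ in $H(\D)$. By Weingarten calculus on the Haar measure of the unitary group, for each fixed $K \geq 1$ one shows
\begin{equation*}
\bigl(\sqrt{N}\,\overline{(U^1)_{11}}, \ldots, \sqrt{N}\,\overline{(U^K)_{11}}\bigr) \distconv (c_1, \ldots, c_K),
\end{equation*}
so every finite Taylor truncation of $F_N$ converges to the corresponding truncation of $\phi_a$. A uniform second-moment bound $\E|\sqrt{N}(U^k)_{11}|^2 = \OO(1)$, combined with Cauchy estimates on each closed sub-disk of $\D$, then yields tightness of $\{F_N\}$ in $H(\D)$ and upgrades the finite-dimensional convergence to the functional limit $F_N \distconv \phi_a$.

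Finally, $\phi_a(0) = a \neq 0$, so the limit is almost surely not identically zero, and the GAF $\phi_a$ almost surely has no zero on the boundary of any prescribed compact subset of $\D$. Hurwitz's theorem then makes the zero-counting map $f \mapsto \sum_{w \in \mathcal{Z}_f} \de_w$ continuous at $\phi_a$, from $H(\D) \setminus \{0\}$ with uniform convergence on compacts to $\mathcal{Q}$ with the vague topology, and the continuous-mapping theorem concludes. The main obstacle is the joint Gaussian asymptotic in Step 2: among the Weingarten pairings of $S_k \times S_\ell$ one has to isolate the leading planar pairing, which forces independence across $k$ and unit variance, while controlling the lower-order contributions uniformly over $\{1,\ldots,K\}$. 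A complementary route, probably better suited to the non-Haar extensions mentioned in the abstract, is to read the required moments off the CMV/Verblunsky description of the spectral measure of $U$ at $e_1$.
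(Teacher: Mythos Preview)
Your proposal is correct and follows essentially the same route as the paper: the rank-one determinant identity (the paper's Lemma~8, via Sylvester), finite-dimensional convergence of the coefficients (Lemma~9, quoting Krishnapur, whose proof is indeed Weingarten-based), and continuity of the zero-set map (Proposition~4, via Hurwitz/Rouch\'e) are all exactly as you describe. The one place the paper differs in execution is the tightness step: rather than bounding the individual coefficient moments $\E|\sqrt{N}(U^k)_{11}|^2$ and summing, it computes $\E|f_N(z)|^2$ directly from the eigendecomposition of $U$ together with the identity $\E|\tr U^\ell|^2=\min(\ell,N)$, obtaining the locally integrable majorant $2|z|^2/(1-|z|^2)$ on $\D$ (Lemma~11), and then invokes a subharmonicity criterion (Proposition~6) in place of your ``Cauchy estimates''.
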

In \cite{ForIps} the authors characterized the eigenvalues of $UA$ as the zeros of a certain random analytic function converging to $\phi_{a'}$; for the required tightness they relied on a previous result from Krishnapur in \cite{Krishnapur}. We adopt a similar strategy but perform the calculation differently, obtaining a random analytic function for which tightness can be directly established. This new approach enables us to generalize Theorem \ref{Forrester-Ipsen result} in two directions. \\
 First, we retain the assumption that $U$ is Haar-distributed, but we allow for more general rank-one perturbations that are not necessarily aligned with a fixed direction. More precisely we now consider $A=I_N - (1-aN^{-1/2})vv^*$ where $v$ is a unit vector that can even be random as long as it is independent from $U$ and distributed with a non-vanishing density on $S_{N-1}(\C)$. Note that when $v=e_1$ then $A=\diag(aN^{-1/2},1,\ldots,1)$ as in Theorem \ref{Forrester-Ipsen result}. Our first main result establishes that Theorem \ref{Forrester-Ipsen result} still holds in this broader setting.
\begin{theorem}\label{Extension result}
   Let $\{c_k\}$ be a sequence of complex independent standard Gaussian random variables.
    Let $U$ be an $N \times N$ Haar-distributed matrix and $A=I_N-(1-aN^{-1/2})vv^*$ with $a \in \C^{*}$ and $v$ a unit vector as previously defined. Then 
    \begin{equation*}
        \sum_{\la \in \Sp(UA)}\de_{\la}\xrightarrow[N \to \infty]{v}\sum_{w\in \mathcal{Z}_{\phi_a}}\de_w
    \end{equation*}
    with 
    \begin{equation*}
        \phi_a(z):= a - \sum_{k \geq 1}c_k z^k, \ \ z\in \D.
    \end{equation*}
    That is, the point process formed by the eigenvalues of $UA$ converges vaguely to the point process formed by the zeros of $\phi_a$.
\end{theorem}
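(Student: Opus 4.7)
The plan is to identify, via the matrix determinant lemma, a random analytic function whose zeros on $\D$ are precisely the eigenvalues of $UA$ lying in $\D$, and then to establish its convergence in distribution in $H(\D)$ to $\phi_a$. Writing $A = I_N + (aN^{-1/2}-1)vv^*$ and factoring $UA - zI = U(A - zU^*)$, the matrix determinant lemma applied to the rank-one update $A - zU^* = (I - zU^*) + (aN^{-1/2}-1)vv^*$ gives, for $|z|<1$,
\[
\det(UA - zI) = \det(U)\det(I-zU^*)\bigl(1 + (aN^{-1/2}-1)\,v^*(I-zU^*)^{-1}v\bigr).
\]
Since $\det(I-zU^*)$ does not vanish on $\D$, the eigenvalues of $UA$ in $\D$ coincide with the zeros of the random analytic function
\[
f_N(z) := 1 + (aN^{-1/2}-1)\sum_{k\geq 0} v^*(U^*)^k v \, z^k.
\]
Isolating the $k=0$ term (which equals $1$) and rescaling by $\sqrt{N}$, I would then study
\[
g_N(z) := \sqrt{N}\, f_N(z) = a - (1-aN^{-1/2})\sum_{k\geq 1} X_k^{(N)}\,z^k, \qquad X_k^{(N)} := \sqrt{N}\,v^*(U^*)^k v,
\]
so that $g_N$ has the same zeros in $\D$ as $f_N$. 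The theorem then reduces to proving $g_N \distconv \phi_a$ in $H(\D)$ and invoking continuity of the zero map.

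For the convergence of $g_N$ I would prove finite-dimensional distributions and tightness separately. Because $v$ is independent of $U$, conditioning on $v$ and conjugating $U$ by any unitary $W$ mapping $v$ to $e_1$ yields $v^*(U^*)^k v \disteq \overline{(U^k)_{11}}$ jointly in $k$, by bi-invariance of the Haar measure; consequently the law of $(X_k^{(N)})_{k\geq 1}$ equals that of $(\sqrt{N}\,\overline{(U^k)_{11}})_{k\geq 1}$ for $U$ Haar on $U(N)$. To show this family converges jointly to i.i.d.\ standard complex Gaussians, I would use the spectral decomposition $(U^k)_{11} = \sum_j e^{ik\theta_j}|V_{1j}|^2$ combined with the independence of eigenvalues and eigenvectors of Haar measure. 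The scalar invariance $U \disteq e^{i\theta}U$ immediately gives $\E[(U^k)_{11}\overline{(U^l)_{11}}] = 0$ for $k\neq l$, and a direct computation using the Diaconis--Shahshahani identity $\E|\tr(U^k)|^2 = k$ (for $k\leq N$) together with the Beta moments of $|V_{1j}|^2$ yields $\E|X_k^{(N)}|^2 = (N+k)/(N+1)\to 1$. Higher mixed moments can be controlled via Weingarten calculus and shown to match, in the large-$N$ limit, those of independent standard complex Gaussians through the Wick formula. For tightness in $H(\D)$, the $L^2$ bound $\E|X_k^{(N)}|^2 = O(1)$ uniform in $k$ gives $\E\sup_{|z|\leq r}|g_N(z)| \leq C_r$ for each $r<1$, whence tightness follows by Markov's inequality and Montel's theorem.

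Once $g_N \distconv \phi_a$ in $H(\D)$ is established, the vague convergence of the empirical measures of zeros follows from the standard fact that the map $H(\D)\setminus\{0\} \to \mathcal{Q}$ sending an analytic function to its multiplicity-counted zero process is continuous in the vague topology (see, e.g., \cite{GAF}). Since $\phi_a(0) = a \neq 0$ the limit is almost surely not identically zero, so combining this continuity with the identification of zeros of $g_N$ in $\D$ as eigenvalues of $UA$ in $\D$ yields $\sum_{\lambda\in\Sp(UA)}\de_\lambda \xrightarrow[N\to\infty]{v}\sum_{w\in\mathcal{Z}_{\phi_a}}\de_w$. The main obstacle in this plan is the joint Gaussian limit of $(X_k^{(N)})_{k\geq 1}$: while the first two moments follow from elementary Haar symmetries, verifying that higher cumulants vanish and that the Wick formula emerges requires a careful combinatorial argument via Weingarten calculus, and it is precisely this step that will need to be reworked for the non-Haar extensions announced in the introduction.
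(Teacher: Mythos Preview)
Your plan coincides with the paper's: identify the eigenvalues in $\D$ as the zeros of $g_N(z)=a-(1-aN^{-1/2})\sum_{k\ge1}X_k^{(N)}z^k$ via the rank-one determinant identity, reduce $v$ to $e_1$ by bi-invariance of Haar measure, prove finite-dimensional convergence of the coefficients and tightness in $H(\D)$, and conclude by continuity of the zero map (the paper's Propositions \ref{zeros convergence} and \ref{limit thm}). Two minor differences are worth noting. First, the paper does not reprove the joint Gaussian limit of $(\sqrt N\,(U^k)_{11})_{k\ge1}$ but simply cites Krishnapur's lemma, so the Weingarten/Wick step you flag as the ``main obstacle'' is taken as a black box here (it is, however, carried out in full in Section~\ref{Section 3} for the i.i.d.\ model). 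Second, for tightness the paper bounds $\E|\tilde f_N(z)|^2$ pointwise in $z$ via the spectral decomposition---using $\E|\tr U^l|^2=\min(l,N)$ and the Beta moments of $|V_{1j}|^2$---and then applies a subharmonic mean-value criterion (Proposition~\ref{prop tight}), whereas your term-by-term $L^1$ bound on the Taylor coefficients is a more elementary route to the same conclusion; note only that your formula $\E|X_k^{(N)}|^2=(N+k)/(N+1)$ is valid for $k\le N$, while for $k>N$ one gets $2N/(N+1)$, so the uniform bound by $2$ still holds. The paper's spectral-decomposition computation is what gets reused for the non-Haar extension in Section~\ref{Section 3}.
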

The second way in which we generalize Theorem \ref{Forrester-Ipsen result} is by considering the model $M=VDV^*$, where $V$ is Haar-distributed, and $D$ is a random diagonal matrix with i.i.d entries, independent of $V$. This is a natural generalization of the decomposition $U=VDV^*$ which is satisfied almost surely by any Haar-distributed matrix $U$, where $\PP(D)\sim \prod _{k<l}\lvert \re^{i\theta_k}-\re^{i\theta_l}\lvert$. The main difference between the two models is that the eigenvalues of $M$ are independent and no longer repel each other. We will furthermore assume that $D$ almost surely has a simple spectrum  and that, if we write  $D=\diag(Z_1,\ldots,Z_N)$ then the sequence of moments of $Z_1$ decays
faster than any polynomial, that is, $\sup_{k\geq 1}\lvert \E[Z_1^k]\lvert=o(N^{-b})$ for any integer $b$. Our second main result shows the conclusions of Theorem \ref{Forrester-Ipsen result} still hold in this more general setting.
\begin{theorem}\label{extension bis}
    Let $\{c_k\}$ be a sequence of complex independent standard Gaussian random variables.
    Let $M=VDV^{*}$ with $V$ and $D$ as previously defined.  We also define the matrix $A:=I_N-(1-aN^{-1/2})vv^*$ with $a \in \C^{*}$ and $v$ a unit vector. Then
    \begin{equation*}
        \sum_{\la \in \Sp(MA)}\de_{\la} \xrightarrow[N \to \infty]{v}\sum_{w \in \mathcal{Z}_{\phi_{a'}}} \de_w
    \end{equation*}
    with
    \begin{equation*}
        \phi_{a'}(z)= a'- \sum_{k\geq1}c_kz^k, \ z\in \D, \ a':=\frac{a}{\sqrt{2}}.
    \end{equation*}
    That is, the point process formed by the eigenvalues of $MA$ converges vaguely to the point process formed by the zeros of $\phi_{a'}$.
\end{theorem}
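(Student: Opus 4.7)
The plan is to mirror the approach used for Theorem \ref{Extension result}: first identify the eigenvalues of $MA$ inside $\D$ as the zeros of an explicit random analytic function $F_N$, then show that $F_N$ converges in distribution in $H(\D)$ to a nonzero scalar multiple of $\phi_{a'}$, and finally transfer the convergence to the point processes via the continuity of the zero map. The first step is a reduction by invariance: because $V$ is Haar and independent of $v$, conditioning on $v$ preserves the Haar law of $V$, so up to rotating $V$ (and leaving $D$ unchanged) one may assume $v=e_1$, which reduces to $A=\diag(aN^{-1/2},1,\ldots,1)$. Applying the matrix determinant lemma to $MA-zI=(M-zI)-(1-aN^{-1/2})Me_1e_1^*$ gives
\begin{equation*}
\det(MA - zI) = \prod_{k=1}^N (Z_k - z)\,\Bigl(1 - (1 - aN^{-1/2})\sum_{k=1}^N |w_k|^2 \tfrac{Z_k}{Z_k - z}\Bigr), \qquad w:=V^* e_1.
\end{equation*}
Using $\sum_k |w_k|^2=1$, the expansion $\tfrac{z}{Z_k-z}=\sum_{j\geq 1}(z/Z_k)^j$ (valid on $\D$ because $|Z_k|=1$), and multiplying by $\sqrt{N}$, the eigenvalues of $MA$ inside $\D$ are exactly the zeros of
\begin{equation*}
F_N(z) = a - (1 - aN^{-1/2})\sum_{j\geq 1} X_j^{(N)} z^j, \qquad X_j^{(N)} := \sqrt{N}\sum_{k=1}^N |w_k|^2 Z_k^{-j}.
\end{equation*}

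The heart of the proof will be the joint convergence of the coefficients $X_j^{(N)}$ to $\sqrt{2}\,c_j$. Writing $w_k=g_k/\|g\|$ with $g$ a standard complex Gaussian vector independent of $D$, one has
\begin{equation*}
X_j^{(N)} = \frac{N}{\|g\|^2}\cdot \frac{1}{\sqrt{N}}\sum_{k=1}^N |g_k|^2 Z_k^{-j},
\end{equation*}
where $N/\|g\|^2 \to 1$ almost surely. The variables $Y_k^{(j)}:=|g_k|^2 Z_k^{-j}$ are i.i.d.\ across $k$; using $|Z_k|=1$, $\E[|g_k|^4]=2$, and the moment-decay assumption $\sup_{k\geq 1}|\E[Z_1^k]|=o(N^{-b})$, for any finite block of indices $j_1,\ldots,j_m$ one obtains
\begin{equation*}
\E[Y_k^{(j_i)}]\to 0, \qquad \E\bigl[Y_k^{(j_i)}\overline{Y_k^{(j_{i'})}}\bigr]\to 2\,\de_{i,i'}, \qquad \E\bigl[Y_k^{(j_i)}Y_k^{(j_{i'})}\bigr]\to 0.
\end{equation*}
The multivariate complex central limit theorem then yields $(X_{j_i}^{(N)})_i \distconv (\sqrt{2}\,c_{j_i})_i$. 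The factor $\sqrt{2}$ — and therefore the replacement $a\mapsto a/\sqrt{2}$ in the statement — comes precisely from $\E[|g_k|^4]=2$, whereas in the Haar case the analogous variance is $1$ because the $|w_k|^2$ are coupled with the eigenvalues of a CUE matrix.

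The same computation yields the uniform $L^2$ bound $\sup_{N,\,j\geq 1}\E[|X_j^{(N)}|^2]\leq C$, from which $\sup_N \E[\sup_{|z|\leq r}|F_N(z)|]<\infty$ for every $r<1$, and hence the tightness of $\{F_N\}$ in $H(\D)$. Combined with the finite-dimensional convergence of the Taylor coefficients from the previous step, this gives
\begin{equation*}
F_N \distconv a - \sqrt{2}\sum_{j\geq 1} c_j z^j = \sqrt{2}\,\phi_{a'} \quad \text{in } H(\D).
\end{equation*}
Since $\sqrt{2}\,\phi_{a'}$ and $\phi_{a'}$ share the same zero set, the vague convergence of the empirical eigenvalue measures to $\sum_{w\in\mathcal{Z}_{\phi_{a'}}}\de_w$ then follows from the continuity of the zero map on $H(\D)$, exactly as in the proofs of Theorems \ref{Forrester-Ipsen result} and \ref{Extension result}.

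The principal obstacle is the joint convergence of the $X_j^{(N)}$'s with the correct variance and asymptotic independence across $j$, uniformly in the index. Each of the three displayed limits above relies on the moment-decay assumption on $Z_1$, which is what simultaneously kills the cross-covariances between distinct Fourier modes $Z_k^{-j}$, the pseudo-covariances, and the individual means, and which ultimately reduces the problem to a classical CLT for i.i.d.\ complex random vectors.
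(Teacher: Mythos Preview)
Your argument is correct, and its overall architecture matches the paper's (identify the eigenvalues as zeros of an explicit analytic function, prove convergence of the Taylor coefficients, prove tightness, then transfer via continuity of the zero map). The substantive difference is in the coefficient convergence. The paper proves its Lemma~\ref{convergence coeff v2} by the method of moments: it expands $\E[S_{n_1}^{p_1}\cdots \overline{S_{n'_r}^{q_r}}]$ using Weingarten calculus for the entries of $V$ together with the moment-decay hypothesis on $Z_1$, and tracks which combinatorial terms survive as $N\to\infty$; this occupies several pages. Your route is considerably more elementary: writing $V^{*}e_1=g/\lVert g\rVert$ with $g$ a standard complex Gaussian vector turns $X_j^{(N)}$ into $(N/\lVert g\rVert^{2})$ times a normalized sum of the i.i.d.\ variables $|g_k|^{2}Z_k^{-j}$, so the joint convergence reduces to Slutsky plus a Lindeberg--Feller CLT for triangular arrays (the law of $Z_1$ may depend on $N$, but $|Y_k^{(j)}|=|g_k|^{2}$ has uniformly bounded moments, so Lyapunov is immediate). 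This is shorter, makes the factor $\sqrt{2}=\sqrt{\E[|g_1|^{4}]}$ transparent, and in fact shows that the hypothesis $\sup_{k\geq1}|\E[Z_1^{k}]|=o(N^{-b})$ for all $b$ can be relaxed to $O(N^{-1/2})$. Two small points worth tightening: your phrase ``the same computation'' for the uniform $L^{2}$ bound is a little loose, since $N/\lVert g\rVert^{2}$ is correlated with the sum --- the clean way is to compute $\E[|X_j^{(N)}|^{2}]$ directly from $\E[|w_k|^{2}|w_l|^{2}]=(1+\delta_{kl})/(N(N+1))$, which gives $\frac{2N}{N+1}+\frac{N(N-1)}{N+1}|\E[Z_1^{j}]|^{2}\leq 2+o(1)$ uniformly in $j$; and your parenthetical explanation for why the Haar variance is $1$ is slightly off --- in both models $w$ is independent of the eigenvalues, and the variance reduction for Haar comes from the repulsion among the CUE eigenvalues themselves (your i.i.d.\ CLT exploits precisely the absence of that repulsion).
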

Note that this function differs from the one in Theorem \ref{Forrester-Ipsen result} only by its constant coefficient.\\

The condition $\sup_{k \geq 1}\lvert \E[Z^k]\lvert=o(N^{-b})$ for all integers $b$ is satisfied by a broad class of random variables including : 
\begin{itemize}
    \item  The uniform law on the unit circle since $\E[Z^k]=\de_{k0}$.
    \item  The wrapped Normal distribution, that is, $Z=\re^{\ri X}$ with $X \sim \mathcal{N}(\mu, \sigma^2)$. In this case $\E[Z^{k}]=\re^{\ri k \mu -k^2\sigma^2/2}$ and $\sup_{k \geq 1}\lvert \E [Z^k]\lvert = \re^{-\sigma^2/2}$. Therefore taking, for instance, $\sigma^2=N^m$ with $m> 0$ or $\ln(N)$ satisfies the condition.
    
    \item The wrapped Cauchy distribution, that is, $Z=\re^{\ri X}$ with $X$ following a Cauchy law with parameter $x_0 \in \R$ and $\gamma >0$. In this case $\E[Z^{k}]=\re ^{-\gamma \lvert k \lvert + \ri k x_0}$ and $\sup_{k \geq 1}\lvert \E [Z^k]\lvert = \re^{-\gamma}$. Therefore taking, for instance, $\gamma$ as any positive power of $N$ satisfies the condition.
\end{itemize}
For a proof of the above results and further background on these random variables and their relevance, notably in statistics, see \cite{Kanti}.\\
\\
In \cite{DubRek} the authors studied a dynamical version of the $UA$ model. Namely they considered 
\begin{equation*}
    G(t):=UA(t), \ t\in [-1,1]
\end{equation*}
with $A(t)=I_N-(1-t)vv^*$ and $U$ a random unitary matrix satisfying some assumptions (which are in particular satisfied by Haar matrices and the model of Theorem \ref{extension bis}). One can then consider the eigenvalues trajectories $\lambda_j(t)$ which, almost surely, do not intersect. They showed that for any $\alpha > 0$, with high probability, an outlier can be detected near $0$ when $\lvert t \lvert < N^{-\frac{1}{2}-\alpha}$. In contrast, for $\lvert t \lvert > N^{-\frac{1}{2}+\alpha}$, all eigenvalues lie, with high probability, very close to the unit circle. \\
They also showed that, for $U$ distributed according to the Haar measure, $N^{-1/2}$ is indeed the optimal timescale. Specifically, at time $t\sim N^{-1/2},$ there is no strongly separated outlier. Their proof relies crucially on the integrability of the model, which prevents this technique from being generalized directly. By observing that, at the critical timescale, the eigenvalues of the matrix correspond to the zeros of a sequence of random analytic functions converging to a Gaussian function, we can generalize their result. This approach establishes the absence of a strongly separated outlier at the critical timescale and extends to non-integrable models, thus offering a broader perspective while avoiding the limitations of integrability-based methods.

\smallskip 

\subsection*{Plan of the paper.}We begin in Section \ref{Section 1} by reviewing some results on the limit of random analytic functions which will be key for our analysis. A proof of Theorem \ref{Extension result}
is then developed in Section \ref{Section 2} that we generalize to another unitarily invariant model in Section \ref{Section 3}. The paper concludes in Section \ref{Section 4} with a demonstration that the timescale, which was proven to be optimal in the Haar setting, is also optimal for the model $M=VDV^{*}$ of Theorem \ref{extension bis}. 

\section*{Acknowledgments}
The author acknowledges support from the Foundation Mathématique Jacques Hadamard through the grant ANR-22-EXES-0013.
\section{Limit theorems for random analytic functions}\label{Section 1}
This section gathers general results that will be used later in the paper. Although the materiel is not new, we include it here with some minor adaptations and additions for the reader's convenience. Several proofs and formulations are inspired by \cite{Shirai}, to which we refer for further details and context.\\
\\
The core reasoning behind the proofs of Theorems \ref{Forrester-Ipsen result}, \ref{Extension result} and \ref{extension bis} is fairly straightforward. The eigenvalues of $UA$ can be characterized as the zeros of an analytic function for which we find a limit in law. The fact that the convergence in law of a sequence of random analytic functions implies the convergence of the associated empirical measures is the object of the next result. 
\begin{proposition}\label{zeros convergence}
    Let $\{f_N\}$ be a sequence of random analytic functions converging in law to a random analytic function $f$. Assume that, almost surely, $f$ is not identically zero. Then the sequence of associated empirical measures converges vaguely to $\mu_f$.
\end{proposition}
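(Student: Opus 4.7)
The plan is to reduce the statement to a deterministic question about the continuity of the zero-counting operation on $H(\D)\setminus\{0\}$, and then to settle that question using Hurwitz's theorem. Since $H(\D)$ is Polish and $f_N \to f$ in law, Skorokhod's representation theorem allows us to realize the sequence on a common probability space in such a way that $f_N \to f$ almost surely in $H(\D)$, i.e.\ uniformly on every compact subset of $\D$. Because the event $\{f\not\equiv 0\}$ has full probability, it suffices to prove the following deterministic claim: if $f_N, f \in H(\D)$ satisfy $f_N \to f$ locally uniformly on $\D$ and $f$ is not identically zero, then $\mu_{f_N} \to \mu_f$ vaguely.

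To prove this claim, fix $\phi \in \mathscr{C}_c(\D)$ with support $K$. Choose $R \in (0,1)$ with $K \subset \{\lv z\lv \le R\}$ and then $R' \in (R,1)$ such that $f$ has no zero on the circle $\{\lv z \lv = R'\}$; set $K' = \overline{B(0,R')}$. Since $f$ is analytic and not identically zero, it has only finitely many zeros $w_1,\ldots,w_m$ in $K'$, with multiplicities $k_1,\ldots,k_m$. For any $\eps>0$, pick disjoint closed disks $\overline{B(w_i,r_i)}$ contained in the interior of $K'$, each small enough that $w_i$ is the only zero of $f$ in its closure and that $\lv \phi(z)-\phi(w_i)\lv < \eps$ throughout $B(w_i,r_i)$. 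On the compact set $K'\setminus \bigcup_i B(w_i,r_i)$ the function $f$ is bounded away from zero; uniform convergence then gives $f_N\ne 0$ there for $N$ large. By Hurwitz's theorem (or equivalently Rouché's theorem on each $\partial B(w_i,r_i)$ and on $\partial K'$), for $N$ large $f_N$ has exactly $k_i$ zeros $w_{i,1}^{(N)},\ldots,w_{i,k_i}^{(N)}$ in $B(w_i,r_i)$, and no other zeros in $K'$.

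Since $\phi$ is supported in $K\subset K'^{\circ}$, only the zeros of $f_N$ in $K'$ contribute to $\mu_{f_N}(\phi)$, and so
\begin{equation*}
\Bigl\lv \mu_{f_N}(\phi)-\sum_{i=1}^{m}k_i\phi(w_i)\Bigr\lv
= \Bigl\lv \sum_{i=1}^{m}\sum_{j=1}^{k_i}\bigl(\phi(w_{i,j}^{(N)})-\phi(w_i)\bigr)\Bigr\lv
\le \eps\sum_{i=1}^{m}k_i
\end{equation*}
for all $N$ large enough. As $\eps$ is arbitrary and $\sum_i k_i\phi(w_i)=\mu_f(\phi)$, we conclude $\mu_{f_N}(\phi)\to \mu_f(\phi)$. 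This yields the pathwise vague convergence $\mu_{f_N}\to \mu_f$, which in turn implies convergence in law of the empirical measures and hence the stated vague convergence of the original sequence.

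The only delicate point is the treatment of zeros of $f$ that may sit close to, or exactly on, $\partial K$: zeros of $f_N$ could migrate across $\partial K$ and thereby affect $\mu_{f_N}(\phi)$ even if they do not affect $\mu_f(\phi)$. This is absorbed by the continuity of $\phi$, since such migrations happen inside the small disks $B(w_i,r_i)$ where $\phi$ is nearly constant, so the jump across $\partial K$ is at most $O(\eps)$. No further integrability or tightness input is needed beyond Hurwitz's theorem, which is what makes the argument robust.
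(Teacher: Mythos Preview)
Your proof is correct and follows essentially the same approach as the paper: both reduce to the deterministic statement that the zero-counting map $f \mapsto \mu_f$ is continuous on $H(\D)\setminus\{0\}$, and both establish this continuity via a Hurwitz/Rouch\'e argument on small disks around the zeros of the limit. The only cosmetic difference is that the paper invokes the continuous mapping theorem directly, whereas you first pass through Skorokhod's representation to obtain almost sure convergence; the underlying analytic content is identical.
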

\begin{proof}
    It is sufficient to show 
    \begin{align*}
        F: &H(\D) \backslash \{0\}  \mapsto \mathcal{Q}\\
        & \ \ \ \ \ \ \ \ \ \ \ \  f\mapsto \sum_{w \in \mathcal{Z}_f} \de_w
    \end{align*}
    is continuous where $\mathcal{Q}$ is endowed with the vague topology and $H(\D)$ is endowed with the topology of uniform convergence on compact subset. Indeed, if $f_n \xrightarrow[N \to \infty]{d} f$ and $F$ is continuous, then $F(f_N) \xrightarrow[N \to \infty]{v} F(f).$ \\
    We now prove that $F$ is continuous. Let $f_N \rightarrow f$ in $H(\D) \backslash \{0\}$. Let $\phi \in \mathscr{C}_c(\D). $ We must show that 
    \begin{equation*}
        \sum_{w \in \mathcal{Z}_{f_n}}\phi(w)\xrightarrow[n\rightarrow\infty]{}\sum_{w\in \mathcal{Z}_f} \phi(w).
    \end{equation*}
    Let $K:=\mathrm{supp}(\phi)$. Denote $z_1,\ldots, z_p$ the distinct zeros of $f$ lying in $K$. Let $\de > 0$ small enough such that the disks $D(z_i,\de)$ are all disjoint. For all $n$ large enough we have that $f_n$ and $f$ have the same number of zeros in $D(z_i,\de), \ i=1,\ldots,p$ counting multiplicity. Let $L:=K\backslash \bigcup D(z_i,\de)$. Then we also have $\inf_{z\in L}\lvert f(z) \lvert >0$. Hence, for all $n$ large enough, by Rouché's Theorem, $f_n$ has no zero in $L$. Denote by $m$ the maximum of the multiplicities of $z_1,\ldots,z_p$ as zeros of $f$. Therefore for all $n$ large enough
    \begin{align*}
        \left\lvert \sum_{w \in \mathcal{Z}_{f_n}} \phi(w) \ \ - \sum_{w \in \mathcal{Z}_{f}} \phi(w) \right\rvert&= 
        \leq pm \max_{1\leq i \leq p} \sup_{z,w \in D(z_i, \de)}\lvert \phi(w)-\phi(z)\lvert \xrightarrow[\de \rightarrow 0]{}0.
    \end{align*}
    This concludes the proof.
\end{proof}

The following result will also be required (for a proof see \cite{Dja}, Lemma 3.2).

\begin{proposition}\label{limit thm}
Let $\{f_N\}$ be a tight sequence of random elements in $H(\D)$, and let us write, for every $N\geq 1$, $f_N(z):=\sum_{k\geq 0}P_k^{(N)}z^k$. If for every $m\geq0$, 
\begin{equation*}
    (P_0^{(N)},\ldots,P_m^{(N)})\xrightarrow[N\to \infty]{d}(P_0,\ldots,P_m)
\end{equation*}
for a common sequence of random variables $\{P_m\}$ then $f=\sum_{k\geq 0}P_kz^k$ is well-defined in $H(\D)$ and 
\begin{equation*}
    f_N \xrightarrow[N \to \infty]{d}f.
\end{equation*}
\end{proposition}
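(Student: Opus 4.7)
The plan is to combine tightness and Prokhorov's theorem with the continuity of Taylor coefficient extraction, in order to identify every subsequential limit of $\{f_N\}$ in law.

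First I would invoke Prokhorov's theorem: since $H(\D)$ is a Polish space, tightness of $\{f_N\}$ implies that from every subsequence one can extract a further subsequence $\{f_{N_j}\}$ converging in law to some random element $g$ of $H(\D)$. It therefore suffices to show that the law of any such $g$ is uniquely determined and is precisely that of $f=\sum_{k\geq 0}P_k z^k$; this will simultaneously ensure that $f$ is itself well-defined as a random analytic function on $\D$.

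The key observation is that, for every $k\geq 0$, the map $L_k:H(\D)\to \C$ sending an analytic function to its $k$-th Taylor coefficient at the origin is continuous. Indeed, by Cauchy's formula
\begin{equation*}
L_k(h)=\frac{1}{2\pi \ri}\oint_{|z|=r}\frac{h(z)}{z^{k+1}}\,\dd z,\qquad 0<r<1,
\end{equation*}
and uniform convergence on the circle $\{|z|=r\}$ makes $L_k$ continuous on $H(\D)$. Applying the continuous mapping theorem to the vector-valued map $(L_0,\ldots,L_m)$ along the convergent subsequence $f_{N_j}\to g$, I get
\begin{equation*}
(P_0^{(N_j)},\ldots,P_m^{(N_j)})=(L_0(f_{N_j}),\ldots,L_m(f_{N_j}))\xrightarrow[j\to\infty]{d}(L_0(g),\ldots,L_m(g)).
\end{equation*}
Combined with the hypothesis, this forces $(L_0(g),\ldots,L_m(g))\disteq(P_0,\ldots,P_m)$ for every $m\geq 0$.

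To conclude, I would argue that the law of $g$ on $H(\D)$ is uniquely determined by the joint distribution of its Taylor coefficients $(L_k(g))_{k\geq 0}$, hence by all of its finite-dimensional marginals. This is because the coefficient map $H(\D)\to \C^{\N}$ is continuous and injective, so the Borel $\sigma$-algebra on $H(\D)$ is generated by the countable family of functionals $\{L_k\}_{k\geq 0}$; a standard $\pi$--$\lambda$ argument then shows that two probability measures on $H(\D)$ agreeing on all cylinder events of this form must coincide. All subsequential limits therefore share the same law, which must agree with that of $f=\sum_{k\geq 0}P_k z^k$, and tightness promotes this to full convergence $f_N\xrightarrow[N\to\infty]{d}f$. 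The most delicate step is precisely this last one: matching finite-dimensional marginals of the coefficients is not, a priori, the same as equality in law on $H(\D)$, and some care is required to justify that the coefficient map is a Borel isomorphism onto its image.
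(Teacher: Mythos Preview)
Your argument is correct and follows the standard route: tightness plus Prokhorov to extract subsequential limits, then identification of those limits via the continuous coefficient functionals. Note that the paper does not actually prove this proposition; it simply refers the reader to Lemma~3.2 of \cite{Dja}, so there is no in-paper proof to compare against.

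Regarding the step you flag as delicate: it is fine, and you do not need the full strength of a Borel isomorphism. The Borel $\sigma$-algebra of $H(\D)$ is generated by the point evaluations $h\mapsto h(z)$ for $z$ in a countable dense subset of $\D$, and each such evaluation is $\sigma(\{L_k\}_{k\geq 0})$-measurable since $h(z)=\lim_{M\to\infty}\sum_{k=0}^{M}L_k(h)z^k$. Hence $\sigma(\{L_k\})$ equals the full Borel $\sigma$-algebra, and your $\pi$--$\lambda$ argument on cylinder events goes through. Well-definedness of $f=\sum_{k\geq 0} P_k z^k$ also follows cleanly: any subsequential limit $g$ lies almost surely in $H(\D)$ and has coefficient sequence equal in law to $(P_k)_{k\geq 0}$, so almost-sure convergence of $\sum_k P_k z^k$ on $\D$ is a law-invariant event of probability one.
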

Note that the tightness is indeed necessary as illustrated by the deterministic example
\begin{equation*}
    h_N(z):=2^N z^N,\ z\in \D.
\end{equation*}
In this example all the coefficients converge to $0$ but $\{h_N\}$ as a sequence of analytic functions does not converge to $0$.
\smallskip

We now need some practical criterion for establishing tightness. If $f$ is an analytic function and $K$ is a compact set in $\D$ we note $\lvert \lvert f \lvert \lvert _{K}:=\sup_{z\in K}\lvert f(z)\lvert.$\\
Let $\{f_N\}$ be a sequence of random analytic functions. A sufficient condition for $\{f_N\}$ to be tight is that the sequence of real-valued random variables $\{\lvert \lvert f_N \lvert \lvert _K\}$ is tight for every compact set $K$ in $\D$ (see \cite{Shirai}, Proposition 2.5).\\
We now state the tightness criterion that we will use in the following sections.
For a compact set $K$ in $\D$ and $\de > 0$ we note $K_{\de}:=\{z \in \C,\  d(z,K)<\de\}$.
\begin{proposition}\label{prop tight}
    Let $\{f_N \}$ be a sequence of random analytic functions. Suppose that there exists $p\in \N$ such that the function $z\mapsto \sup_N \E[\lv f_N(z) \lv ^p]$ is locally integrable on $\D$. Then $\{f_N\}$ is tight in $H(\D)$.
\end{proposition}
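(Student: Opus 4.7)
The plan is to use the sufficient criterion recalled just before the proposition: it is enough to show that for every compact $K \subset \D$ the sequence of real-valued random variables $\{\|f_N\|_K\}$ is tight. By Markov's inequality, tightness of $\{\|f_N\|_K\}$ follows if $\sup_N \E[\|f_N\|_K^p] < \infty$, so the real task is to bound the $p$-th moment of the sup norm on $K$ in terms of the pointwise quantity $\sup_N \E[|f_N(z)|^p]$ which, by hypothesis, is locally integrable.

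The bridge between pointwise moments and the sup norm is subharmonicity. Since $f_N$ is analytic, $|f_N|^p$ is subharmonic on $\D$, so for any $z \in \D$ and any $\delta > 0$ such that $\overline{D(z,\delta)} \subset \D$, the mean-value inequality yields
\begin{equation*}
|f_N(z)|^p \leq \frac{1}{\pi \delta^2} \int_{D(z,\delta)} |f_N(w)|^p \, dA(w),
\end{equation*}
where $dA$ is the planar Lebesgue measure. Given a compact $K \subset \D$, pick $\delta > 0$ small enough that $\overline{K_\delta} \subset \D$, which is possible because $K$ is compact and $\D$ is open. For every $z \in K$, the disk $D(z,\delta)$ is contained in $K_\delta$, and enlarging the domain of integration gives
\begin{equation*}
|f_N(z)|^p \leq \frac{1}{\pi \delta^2} \int_{K_\delta} |f_N(w)|^p \, dA(w),
\end{equation*}
uniformly in $z \in K$. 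Taking the supremum over $z \in K$ yields the same bound for $\|f_N\|_K^p$.

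Taking expectations and applying Tonelli's theorem, one gets
\begin{equation*}
\E\bigl[\|f_N\|_K^p\bigr] \leq \frac{1}{\pi \delta^2} \int_{K_\delta} \E\bigl[|f_N(w)|^p\bigr] \, dA(w) \leq \frac{1}{\pi \delta^2} \int_{K_\delta} \sup_M \E\bigl[|f_M(w)|^p\bigr] \, dA(w).
\end{equation*}
The right-hand side is independent of $N$, and is finite because $\overline{K_\delta}$ is a compact subset of $\D$ on which the function $z \mapsto \sup_M \E[|f_M(z)|^p]$ is integrable by assumption. Hence $\sup_N \E[\|f_N\|_K^p] < \infty$, which by Markov's inequality gives tightness of $\{\|f_N\|_K\}$ for every compact $K$, and therefore tightness of $\{f_N\}$ in $H(\D)$.

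No step in this argument presents a serious obstacle; the only point requiring minimal care is the choice of $\delta$ so that $K_\delta$ remains compactly contained in $\D$, and the routine verification that the mean-value inequality for the non-negative subharmonic function $|f_N|^p$ may be applied uniformly on $K$.
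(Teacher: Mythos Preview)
Your proof is correct and follows essentially the same route as the paper: both use the area mean-value inequality for the subharmonic function $|f_N|^p$ to bound $\|f_N\|_K^p$ by an integral over a slightly enlarged compact, then apply Fubini/Tonelli and Markov's inequality together with the sufficient criterion for tightness. The only difference is that the paper spells out the proof of the sub-mean-value inequality (its Claim~1) via the Dirichlet problem, whereas you invoke it as a standard fact about subharmonic functions.
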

\begin{proof}
    This follows directly from the next claim, proven below.  
    \begin{claim}\label{claim integral}
        Let $f \in H(\D)$ and let $K$ be a compact set in $\D$. Then for any $p \in \N$ and any sufficiently small $\de > 0$
        \begin{equation*}
            \lv \lv f \lv \lv_K ^p \leq (\pi \de^2)^{-1}\int_{\overline{K}_{\de}}\lv f(z) \lv ^p dm(z)
        \end{equation*}
        where $m$ is the Lebesgue measure on $\C$.
    \end{claim}
    Assuming the claim for now we have 
    \begin{equation*}
        \lv \lv f_N \lv \lv_K ^p \leq (\pi \de^2)^{-1}\int_{\overline{K}_{\de}}\lv f_N(z) \lv ^p dm(z).
    \end{equation*}
    Then by Fubini we get 
    \begin{align*}
        \E[\lv \lv f_N \lv \lv_K^p]&\leq (\pi \de^2)^{-1}\int_{\overline{K}_{\de}}\E[\lv f_N(z) \lv^p]dm(z)\\
        &\leq (\pi \de^2)^{-1}\int_{\overline{K}_{\de}}\sup_N\E[\lv f_N(z) \lv^p]dm(z)\\
        &< \infty 
    \end{align*}
    and so the $p$-th moment of the norms are uniformly bounded. Therefore by Markov's inequality the sequence $\{\lv \lv f_N \lv \lv _K\}$ is tight for any compact set $K$ in $\D$. As stated just before Proposition \ref{prop tight}, it implies the tightness of $\{f_N\}$ in $H(\D)$.
\end{proof}
\smallskip

We now prove the claim.
\begin{proofc}
    Let $f \in H(\D)$ and $K$ a compact set in $\D$. Let $p \in \N$ and $\de>0$ such that $\overline{K}_{\de} \subset \D$. We start by showing that, for $0 \leq r_1 < r_2 < \de$, for $a \in K$, 
    \begin{equation}\label{increasing integral}
        \frac{1}{2 \pi}\int_0^{2\pi}\lv f(a+r_1\re^{\ri \theta})\lv ^p d\theta \leq \frac{1}{2 \pi}\int_0^{2\pi}\lv f(a+r_2\re^{\ri \theta})\lv ^p d\theta.
    \end{equation}
    \smallskip
    
    For this we first show that $z \mapsto \lv f(z) \lv ^p$ is subharmonic in $\D$, that is that for any $a \in \D$, there exists $R_a > 0$ such that for any $0\leq r \leq R_a$
    \begin{equation*}
        \lv f(a) \lv^p \leq \frac{1}{2 \pi}\int_0^{2\pi}\lv f(a + r\re^{\ri \theta})\lv^p d\theta.
    \end{equation*}
    Since $f$ is analytic and $p$ is an integer, $f^p$ is also analytic and therefore satisfies the Mean Value Property
        \begin{equation*}
            f(a)^p=\frac{1}{2\pi}\int_0^{2\pi}f(a+r\re^{\ri \theta})^p d \theta
        \end{equation*}
        for $r$ small enough. The desired inequality easily follows.
    \smallskip
     
    Now let $0\leq r_1 \leq r_2 < \de$ and $a \in K$. Define $g(z):= \lv f(a+z) \lv  ^p$ for $z\in D(0,\de).$ Based on the preceding discussion we see that $g$ is subharmonic. Let $u$ be a harmonic function in $D(0,r_2)$ such that $u=g$ on $\partial D(0,r_2)$ (this is the classical Dirichlet problem, see for instance \cite{Jost}, Theorem 2.1.2). Now since $g$ is a continuous subharmonic function, $u$ is a harmonic function and $g=u$ on $\partial D(0,r_2)$ we have that $g\leq u$ in $D(0,r_2)$ (see for instance \cite{Edmunds}, Corollary 2.1.10). We can now perform the following computations
    \begin{align*}
        \frac{1}{2 \pi}\int_0^{2\pi}g(r_1 \re^{\ri \theta})d\theta & \leq \frac{1}{2\pi}\int_0^{2\pi}u(r_1 \re ^{\ri \theta})d\theta\\
        &=u(0)\\
        &=\frac{1}{2 \pi}\int_0^{2\pi}u(r_2 \re ^{\ri \theta})d\theta \\
        &=\frac{1}{2 \pi}\int_0^{2\pi}g(r_2 \re ^{\ri \theta})d\theta
    \end{align*}
    where at the second and fourth line we used the fact that an harmonic function satisfies the Mean Value Property. Recalling the definition of $g$ we see that this is exactly \eqref{increasing integral}.\\
    Taking $r_1=0$ and $r_2=r$  and multiplying by $r$ we get
    \begin{equation*}
        r\lvert f(a) \lvert ^p \leq \frac{1}{2 \pi}\int_0^{2 \pi} \lvert f(a+r\re^{\ri \theta})\lvert^p r d\theta.
    \end{equation*}
    Now integrating over $r$ it yields
    \begin{align*}
        \frac{\de^2}{2}\lv f(a) \lv^p & \leq \frac{1}{2\pi}\int_0^{\de}\int_0^{2\pi}\lv f(a+r\re^{\ri \theta})\lv^p d\theta r dr \\
        &\leq \frac{1}{2\pi}\int_{\lv a-z \lv < \de}\lv f(z) \lv ^p dm(z)\\
        &\leq \frac{1}{2 \pi}\int _{\overline{K}_{\de}}\lv f(z)\lv^p dm(z).
    \end{align*}
    Rearranging the inequality and taking the sup over $a$ in $K$ concludes the proof.
    \qed
\end{proofc}
\section{Multiplicative Perturbation Of  A Haar-Distributed Matrix}\label{Section 2}
Throughout this section $U$ will be an $N\times N$ Haar-distributed matrix and we define the matrix $A:=I_N-(1-aN^{-1/2})vv*$ with $a \in \C^{*}$ and $v$ a unit vector that can be random as long as it is independent from $U$ and distributed with a non-vanishing density on $S_{N-1}(\C)$. Using Weyl's inequalities it can be easily seen that the spectrum of $UA$ lies within the closed unit disk. In fact we can do slightly better.

\begin{lemma}\label{spectrum in the disk}
    The spectrum of $UA$ lies almost surely in the open unit disk. 
\end{lemma}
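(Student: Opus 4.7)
The plan is to argue that a unimodular eigenvalue of $UA$ would force a very special alignment between $v$ and the eigenvectors of $U$, which is excluded by the density assumption on $v$.

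First I would restrict attention to $N$ large enough that $|a|N^{-1/2} < 1$, so that the singular values of $A$ (namely $|a|N^{-1/2}$ and $1$ with multiplicity $N-1$) are all at most $1$; Weyl's inequality then recovers the closed-disk bound mentioned before the lemma. Suppose now $\lambda \in \Sp(UA)$ with $|\lambda|=1$, and let $w$ be a unit eigenvector: $UAw=\lambda w$. Since $U$ is unitary, $\|Aw\|=|\lambda|=1$. Writing $\alpha:=1-aN^{-1/2}$ and expanding
\begin{equation*}
\|Aw\|^2 = \|w - \alpha\langle v,w\rangle v\|^2 = 1 - \bigl(1-|aN^{-1/2}|^2\bigr)|\langle v,w\rangle|^2,
\end{equation*}
the strict inequality $1-|aN^{-1/2}|^2>0$ forces $\langle v,w\rangle=0$. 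But then $Aw=w$, so $Uw=UAw=\lambda w$, i.e. $w$ is an eigenvector of $U$ and $v \perp w$.

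Thus, in order for $UA$ to have a unimodular eigenvalue, $v$ must be orthogonal to one of the (at most $N$) eigenvectors of $U$. The remaining step is to exclude this event almost surely. Conditionally on $U$, its eigenvectors $w_1,\dots,w_N$ are fixed unit vectors (well-defined up to phase, a.s., since $U$ is Haar-distributed and therefore has a.s.\ simple spectrum). The set
\begin{equation*}
E_U := \bigl\{ x \in S_{N-1}(\C) : \langle w_j, x\rangle = 0 \text{ for some } j \bigr\}
\end{equation*}
is a finite union of real submanifolds of $S_{N-1}(\C)$ of positive codimension, hence has zero surface measure. By independence of $v$ and $U$, together with the hypothesis that the law of $v$ has a density with respect to the uniform measure on $S_{N-1}(\C)$, we get $\PP(v \in E_U) = \E[\PP(v \in E_U \mid U)] = 0$. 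Combining with the previous paragraph, $UA$ almost surely has no eigenvalue on the unit circle, which together with the closed-disk containment gives the claim.

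The only subtle point is verifying that orthogonality to a fixed unit vector defines a Lebesgue-negligible subset of $S_{N-1}(\C)$; this is entirely standard once one writes $\{x : \langle w, x\rangle = 0\} \cap S_{N-1}(\C)$ as a codimension-$2$ real submanifold, so there is no real obstacle — the whole argument is essentially a short linear-algebraic computation followed by a measure-zero remark.
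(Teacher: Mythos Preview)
Your argument is correct and follows essentially the same route as the paper: both reduce a unimodular eigenvalue to the situation where its eigenvector $w$ is simultaneously an eigenvector of $U$ and orthogonal to $v$, then rule this out as a null event. The linear-algebra part is identical (the paper phrases it via diagonalizing $A^*A$, you via expanding $\|Aw\|^2$, but these are the same computation). The only substantive difference is the direction of conditioning in the measure-zero step: you condition on $U$ and invoke the density hypothesis on $v$, whereas the paper (implicitly) conditions on $v$ and uses that the eigenvectors of a Haar matrix almost surely avoid any fixed hyperplane. The paper's direction has a small advantage: it also covers deterministic $v$ (in particular $v=e_1$, which the paper explicitly allows as the special case recovering Theorem~\ref{Forrester-Ipsen result}), whereas your argument as written requires $v$ to have a density. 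You may want to add a line handling the deterministic case, or simply swap the conditioning.
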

\begin{proof}
    First note that $(UA)^*UA=A^*A=I_N-(1-\lvert a \lvert^2N^{-1})vv^*$. Now let $\lambda$ an eigenvalue of $UA$ and $w$ a unit eigenvector associated to this eigenvalue. We then have 
    \begin{equation*}
        \lvert \lambda \lvert ^2= \langle UAw , UAw \rangle= \lvert a \lvert ^2 N^{-1}\lvert \widetilde{w_1}\lvert^2+ \lvert \widetilde{w_2}\lvert^2+\ldots + \lvert \widetilde{w_N}\lvert ^2
    \end{equation*}
    where the $\widetilde{w_i}$ are the coordinates of $w$ in an orthogonal basis that diagonalizes $(UA)^*(UA)$. We directly see that $\lvert \lambda \lvert \leq \lvert w\lvert=1$. Moreover $\lvert \lambda \lvert =1$ implies that $\widetilde{w_1}=0$ which then implies that $w$ is in the kernel of $vv^*$. Therefore 
    \begin{equation*}
        \lambda w=UAw=U(I_N-(1-aN^{-1/2})vv^*)w=Uw.
    \end{equation*}
    But then $w$ is an eigenvector of $U$ that lies in a subspace of positive codimension : this is an event of probability zero.
\end{proof}
\begin{rmk}\label{a.s spectrum}
    Notice that the previous argument only relies on the fact that almost surely no eigenvector of $U$ lies in a subspace of positive codimension. In particular the result still holds for the model $M=VDV^*$ which we study in Section \ref{Section 3}.
\end{rmk}
We now characterize the eigenvalues of $UA$ as the zeros of an analytic function whose coefficients depend on $U$ in a simple way.\\
We recall Sylvester's identity : for any $n\times m$ matrix $C$ and $m\times n$ matrix $B$ it holds that
\begin{equation*}
    \det(I_n + CB)=\det(I_m +BC).
\end{equation*}
.
\begin{lemma}\label{zeros eigenvalues}
    The eigenvalues of $UA$ are exactly the zeros of the following random analytic function 
    \begin{equation*}
        f_N(z):=a-\left(\scalebox{0.9}{$\sqrt{N}$}-a\right)\sum_{k\geq 1}\left(v^{*}(U^{*})^kv\right)z^k, \quad z\in \D.
    \end{equation*}
\end{lemma}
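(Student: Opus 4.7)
The plan is to compute $\det(z I_N - UA)$ in closed form on the open unit disk, show that it factors as a nowhere-vanishing analytic function times $f_N(z)$, and then invoke Lemma \ref{spectrum in the disk} to conclude that \emph{all} eigenvalues of $UA$ (counted with multiplicity) are accounted for.

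First, I would write $A = I_N - \alpha\, v v^{*}$ with $\alpha := 1 - a N^{-1/2}$, and use $\det(zI_N - UA) = \det(U)\det(zU^{*} - A)$ for $z \in \D$ (note $\det U \neq 0$). Next, I rewrite
\begin{equation*}
    zU^{*} - A \; = \; -(I_N - zU^{*}) + \alpha\, v v^{*}.
\end{equation*}
Since $U^{*}$ is unitary, $\|zU^{*}\| = |z| < 1$ for $z \in \D$, so $I_N - zU^{*}$ is invertible and the Neumann series $(I_N - zU^{*})^{-1} = \sum_{k \geq 0} z^{k} (U^{*})^{k}$ converges (in operator norm) uniformly on compacts of $\D$. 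This lets me factor
\begin{equation*}
    \det(zU^{*} - A) \; = \; (-1)^{N}\det(I_N - zU^{*})\,\det\!\left(I_N - \alpha (I_N - zU^{*})^{-1} v v^{*}\right).
\end{equation*}

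Then I would apply Sylvester's identity to the rank-one term inside the second determinant:
\begin{equation*}
    \det\!\left(I_N - \alpha (I_N - zU^{*})^{-1} v v^{*}\right) \; = \; 1 - \alpha\, v^{*} (I_N - zU^{*})^{-1} v \; = \; 1 - \alpha \sum_{k \geq 0} \left(v^{*}(U^{*})^{k} v\right) z^{k}.
\end{equation*}
Since $v^{*} v = 1$ and $1 - \alpha = a N^{-1/2}$, the right-hand side equals
\begin{equation*}
    a N^{-1/2} - (1 - a N^{-1/2}) \sum_{k \geq 1} \left(v^{*}(U^{*})^{k} v\right) z^{k} \; = \; N^{-1/2}\, f_N(z).
\end{equation*}
Putting the pieces together,
\begin{equation*}
    \det(zI_N - UA) \; = \; (-1)^N N^{-1/2}\, \det(U)\, \det(I_N - zU^{*})\, f_N(z),
\end{equation*}
which is a factorization into two analytic functions on $\D$, the prefactor being nowhere zero (since $\det(I_N - zU^{*}) = \prod_j (1 - z\,\overline{\mu_j}) \neq 0$ for $z \in \D$, where $\mu_j$ are the eigenvalues of $U$). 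Hence the zeros of $f_N$ in $\D$, counted with multiplicity, coincide with the eigenvalues of $UA$ lying in $\D$; and by Lemma \ref{spectrum in the disk} this accounts for \emph{all} eigenvalues almost surely.

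I don't expect a serious obstacle here: the only subtle point is ensuring analyticity and the legitimacy of the series expansion, which follows from $\|zU^{*}\| < 1$ on $\D$ (giving uniform convergence on compacts of $\D$). The book-keeping on the zero-th term (so that the constant coefficient of $f_N$ is $a$ and not $aN^{-1/2}$) is the one place to be careful, which is why I multiplied through by $N^{1/2}$ at the end.
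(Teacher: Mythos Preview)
Your proof is correct and follows essentially the same route as the paper: both arguments write the characteristic equation, exploit the rank-one structure of $A$, factor out the invertible piece $I_N - zU^{*}$, apply Sylvester's identity to reduce to a scalar, expand via the Neumann series, and normalize by $\sqrt{N}$. Your explicit factorization of $\det(zI_N - UA)$ has the minor advantage of making the multiplicity statement transparent, whereas the paper phrases the argument as a chain of equivalences.
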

\begin{proof}
    By definition, $z$ is in the spectrum of $UA$ if and only if 
    \begin{equation*}
        0=\det\left(UA-z\right)=\det \left(U\left(I_N-\left(1-aN^{-1/2}\right)v v^{*}\right)-z\right)=\det\left(U-z-\left(1-aN^{-1/2}\right)Uvv^{*}\right).
    \end{equation*}
    By Lemma 7 we can restrict ourselves to $\lv z \lv <1$. Then the previous equation is equivalent to 
    \begin{equation*}
        \det\left(I_N-\left(1-aN^{-1/2}\right)(U-z)^{-1}Uvv^{*}\right)=0.
    \end{equation*}
    Using Sylvester's identity we find that $z$ is in the spectrum of $UA$ if and only if 
    \begin{equation*}
        1 - \left(1 - aN^{-1/2}\right)v^{*}\left(I_N-zU^{*}\right)^{-1}v=0.
    \end{equation*}
    Since $\lv z \lv <1$, we can perform the following series expansion
    \begin{equation*}
        1-\left(1-aN^{-1/2}\right)\sum_{k\geq 0}v^{*}(U^{*})^kvz^k=0.
    \end{equation*}
    By multiplying both sides by $\sqrt{N}$ we find that $z$ is in the spectrum of $UA$ if and only if
    \begin{equation*}
        f_N(z)=0
    \end{equation*}
    which concludes the proof.
\end{proof}
Our goal now is to apply Proposition \ref{limit thm}. We first address the finite-dimensional convergence of the coefficients, relying on the following result by Krishnapur in \cite{Krishnapur}, Lemma 10. Recall that $\{c_k\}$ is a sequence of independent standard complex Gaussians random variables.
\begin{lemma}\label{cv coefficients}
    The following convergence in law holds true 
    \begin{equation*}
        \scalebox{1.0}{$\sqrt{N}$}(e_1^{*}U^{*}e_1,e_1^{*}(U^{*})^2e_1,\ldots)\xrightarrow[N \to \infty]{d}(c_1,c_2,\ldots).
    \end{equation*}
    More precisely, any finite number of the random variables $\scalebox{0.9}{$\sqrt{N}$} e_1^{*}(U^{*})^ke_1$ converge in law to the corresponding Gaussian vector.
\end{lemma}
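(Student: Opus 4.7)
By the unitary invariance of the Haar measure it suffices to consider $v=e_1$, and since $e_1^*(U^*)^ke_1=\overline{U^k_{11}}$ and the target law $(c_k)$ is invariant under componentwise conjugation, it is equivalent to prove the joint convergence $\sqrt{N}(U^k_{11})_{k=1}^m \xrightarrow{d} (c_1,\ldots,c_m)$ for every fixed $m$.

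My plan is to exploit the spectral decomposition $U=VDV^*$, where $D=\diag(\lambda_1,\ldots,\lambda_N)$ holds the eigenvalues on the unit circle and $V$ is itself Haar-distributed and independent of $D$. This gives $U^k_{11}=\sum_j|V_{1j}|^2\lambda_j^k$, and representing the first row of $V$ through i.i.d.\ standard complex Gaussians $g_1,\ldots,g_N$ via $V_{1j}=g_j/\|g\|$ leads to
\begin{equation*}
\sqrt{N}\,U^k_{11} \;=\; \frac{N}{\|g\|^2}\cdot\frac{1}{\sqrt{N}}\Bigl(\,\sum_{j=1}^{N}(|g_j|^2-1)\lambda_j^k \,+\, \tr(U^k)\Bigr).
\end{equation*}
The prefactor $N/\|g\|^2$ tends to $1$ almost surely by the law of large numbers. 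The trace contribution $\tr(U^k)/\sqrt{N}$ vanishes in probability because of the Diaconis--Shahshahani bound $\E|\tr(U^k)|^2=\min(k,N)$, so that $\tr(U^k)=O_{\PP}(1)$. The asymptotics are therefore driven by the sum $S_N^{(k)}:=N^{-1/2}\sum_j(|g_j|^2-1)\lambda_j^k$.

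Given the eigenvalues, $S_N^{(k)}$ is a sum of independent centered complex random variables with real coefficients $|g_j|^2-1$ and complex weights $\lambda_j^k$, to which I would apply a conditional complex central limit theorem. Using $\var(|g_j|^2-1)=1$ the key conditional second moments are
\begin{equation*}
\E\bigl[|S_N^{(k)}|^2\bigm|D\bigr]=1,\quad \E\bigl[S_N^{(k)}S_N^{(k')}\bigm|D\bigr]=\frac{\tr(U^{k+k'})}{N},\quad \E\bigl[S_N^{(k)}\overline{S_N^{(k')}}\bigm|D\bigr]=\frac{\tr(U^{k-k'})}{N}\ (k\neq k').
\end{equation*}
Since $\tr(U^p)/N\to 0$ in probability for every fixed $p\neq 0$, the conditional variances tend to $1$ while both conditional pseudo-variances and cross-covariances tend to $0$; these are precisely the parameters of a vector of i.i.d.\ standard complex Gaussians. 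Lyapunov's condition is immediate since $|g_j|^2-1$ has moments of every order.

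The only real obstacle is upgrading the conditional convergence to an unconditional one. I would handle this at the level of characteristic functions: conditionally on $D$, the characteristic function of $(S_N^{(k)})_{k\leq m}$ converges, on an event of probability tending to $1$, to the deterministic target $\exp\bigl(-\sum_{k=1}^m|t_k|^2\bigr)$; dominated convergence then yields convergence of the unconditional characteristic functions, which is the desired joint convergence in law.
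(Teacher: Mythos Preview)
The paper does not prove this lemma; it simply quotes it as a result of Krishnapur. (For the parallel statement in the i.i.d.\ eigenvalue model, Lemma~\ref{convergence coeff v2}, the paper does supply a proof, via the method of moments and Weingarten calculus.) Your argument is therefore a genuinely different, more self-contained route. The ingredients are all legitimate: the eigenvector matrix of a Haar unitary can indeed be taken Haar and independent of the spectrum (as the paper itself uses in Section~\ref{Section 2}); the Diaconis--Shahshahani bound $\E|\tr(U^k)|^2=\min(k,N)$ disposes of the trace contribution; the conditional covariance identities are correct since $\var(|g_1|^2)=1$ and $|\lambda_j|=1$; and the Lindeberg condition holds uniformly in $D$ because each summand is $N^{-1/2}(|g_j|^2-1)$ times a unimodular weight. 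The one step that deserves more care is the passage from conditional to unconditional convergence: the conditional covariance matrix converges to the target only \emph{in probability}, so the phrase ``on an event of probability tending to $1$'' is imprecise. The clean fix is a subsequence argument --- along any subsequence extract a further subsequence on which $\tr(U^p)/N\to 0$ almost surely for the finitely many relevant $p$, apply the Lindeberg CLT pointwise in $\omega$, and then use bounded convergence for the conditional characteristic functions. With that said, your proof is correct. Compared with the moment method, your approach is lighter and more probabilistic, and it has the pleasant feature of carrying over almost verbatim to the model of Section~\ref{Section 3}, where the paper instead performs a lengthy moment computation.
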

From this result we immediately deduce the desired convergence. Recall that $\phi_a$ is defined to be 
\begin{equation}\label{function limit}
    \phi_a(z)=a-\sum_{k\geq1}c_k z^k, \ z\in \D.
\end{equation}

It is well-defined in $H(\D)$ since, almost surely, $\limsup \lv c_k \lv^{1/k}=1$.

\begin{lemma}\label{convergence of the coefficient}
    The coefficients of $\{f_N\}$ converge in the sense of finite-dimensional law to the coefficients of $\phi_a$.
\end{lemma}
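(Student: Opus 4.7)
The plan is to isolate what needs to be shown, rescale, and then reduce to the fixed-direction case handled by Lemma \ref{cv coefficients} via Haar invariance.

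First, I would write out the coefficients of $f_N$: the constant term is deterministically equal to $a$, matching the constant term of $\phi_a$, while for $k\geq 1$ the $k$-th coefficient is $-(\sqrt{N}-a)\,v^*(U^*)^kv$. Fixing $m\geq 1$, the finite-dimensional convergence to be proved therefore amounts to
\begin{equation*}
\bigl(-(\sqrt{N}-a)\,v^*(U^*)^kv\bigr)_{1\leq k\leq m}\xrightarrow[N\to\infty]{d}(-c_k)_{1\leq k\leq m}.
\end{equation*}
Since $(-c_1,\dots,-c_m)\disteq(c_1,\dots,c_m)$ and $(\sqrt{N}-a)/\sqrt{N}\to 1$, Slutsky's lemma reduces this to proving
\begin{equation*}
\sqrt{N}\,\bigl(v^*(U^*)^kv\bigr)_{1\leq k\leq m}\xrightarrow[N\to\infty]{d}(c_1,\dots,c_m).
\end{equation*}

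The main step is to reduce this to the case $v=e_1$, which is exactly what Lemma \ref{cv coefficients} covers. I would condition on $v$ and use unitary invariance of the Haar measure. Conditionally on $v$, one can pick a unitary matrix $W=W(v)$ with $Wv=e_1$; then for every $k$,
\begin{equation*}
v^*(U^*)^kv=(Wv)^*\bigl(WU^*W^*\bigr)^k(Wv)=e_1^*(\widetilde U^*)^ke_1,\qquad \widetilde U:=WUW^*.
\end{equation*}
Because $U$ is Haar-distributed and independent of $v$, $\widetilde U$ is, conditionally on $v$, again Haar-distributed. Applying Lemma \ref{cv coefficients} to $\widetilde U$ gives the convergence conditionally on $v$, and since the limiting law does not depend on $v$, we can integrate out the conditioning (for instance by checking convergence of characteristic functions, which, being bounded, passes through the conditional expectation by dominated convergence) to obtain the unconditional statement.

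I do not expect a serious obstacle: the only subtle point is making the conditioning on the random unit vector $v$ rigorous, which is handled cleanly by the measurability of $W(v)$ (one can choose it, e.g., as a suitable Householder reflection depending measurably on $v$) together with the invariance $U\disteq WUW^*$ for every fixed unitary $W$.
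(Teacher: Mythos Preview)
Your proposal is correct and follows essentially the same route as the paper: identify the coefficients, reduce via Haar invariance and independence of $v$ to the $e_1$ case covered by Lemma~\ref{cv coefficients}, and finish with Slutsky. The only cosmetic differences are that the paper asserts the distributional identity $(v^*(U^*)^kv)_k \disteq (e_1^*(U^*)^ke_1)_k$ directly rather than via your conditioning argument, and it splits $P_k^{(N)}=-\sqrt{N}\,v^*(U^*)^kv + a\,v^*(U^*)^kv$ (the second term tending to $0$) instead of factoring out $-(\sqrt{N}-a)$.
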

\begin{proof}
    The coefficients of $f_N$ are 
    \begin{equation*}
        P_0^{(N)}=a \ \text{and} \ P_k^{(N)}=-\scalebox{0.9}{$\sqrt{N}$}v^{*}(U^{*})^kv + av^{*}(U^{*})^kv.
    \end{equation*}
    Using the independence of $U$ and $v$, together with the unitary invariance of $U$, we obtain that for any integer $m$ the vector $\left(v^*U^*v,v^*(U^*)^2v,\ldots,v^*(U^*)^mv \right)$ has the same distribution as the vector $\left(e_1^*U^*e_1,e_1^*(U^*)^2e_1,\ldots,e_1^*(U^*)^me_1 \right)$. 
    Then by Lemma \ref{cv coefficients} we have that $\scalebox{1.0}{$\sqrt{N}$}v^{*}(U^{*})^kv \xrightarrow[N\to \infty]{d}c_k$ and so $v^{*}(U^{*})^kv \xrightarrow[N \to \infty]{d}0$.
    For any $m\geq 0$ we can write 
    \begin{equation*}
        (P_0^{(N)},\ldots,P_m^{(N)})= -\scalebox{1.0}{$\sqrt{N}$}(0,v^{*}U^{*}v,\ldots,v^{*}(U^{*})^mv)+(a,a v^{*}U^{*}v,\ldots,a v^{*}(U^{*})^mv).
    \end{equation*}
    Therefore we can conclude with Slutsky's Theorem.
\end{proof}
\smallskip
It remains to show the tightness of $\{f_N\}$. Our approach here diverges from that of \cite{ForIps}. Drawing inspiration from the computations in \cite{DubRek} we provide a direct proof of the tightness of $\{f_N\}$ using the criterion established in Proposition \ref{prop tight}. We denote the eigenvalues of $U$ by $\re^{\ri \theta_1},\ldots,\re^{\ri \theta_N}$ and the corresponding eigenvectors by $r_1,\ldots, r_N$ and we write $U=R\Delta R^{*}$ with $\Delta=\diag(\re^{\ri \theta_1},\ldots,\re^{\ri \theta_N})$.
\begin{lemma}\label{Tightness function}
    The sequence of random analytic functions $\{f_N\}$ is tight in $H(\D)$.
\end{lemma}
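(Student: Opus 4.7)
The plan is to apply Proposition \ref{prop tight} with $p=2$, by showing that $z\mapsto \sup_N \E[\lv f_N(z)\lv^2]$ is bounded on compact subsets of $\D$ (and hence locally integrable there). By the same unitary invariance argument used in the proof of Lemma \ref{convergence of the coefficient}, the distribution of $f_N$ is unchanged when $v$ is replaced by $e_1$, so I may assume $v=e_1$ and abbreviate $X_k:=e_1^{*}(U^{*})^k e_1$.

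Two orthogonality relations will drive the computation. First, the rotational invariance $U\sim e^{\ri \phi}U$ sends $X_k$ to $e^{-\ri k\phi}X_k$, which forces $\E[X_k]=0$ for $k\geq 1$ and $\E[X_k\overline{X_l}]=0$ for $k\neq l$; this kills every cross-term in the expansion of $\lv f_N(z)\lv^2$, leaving
\[
\E[\lv f_N(z)\lv^2]=\lv a\lv^2+\lv \sqrt{N}-a\lv^2\sum_{k\geq 1}\E[\lv X_k\lv^2]\lv z\lv^{2k}.
\]
Second, to bound $\E[\lv X_k\lv^2]$ uniformly in both $k$ and $N$, I plan to use the spectral decomposition $U=R\Delta R^{*}$, which exhibits $X_k=\sum_j u_j e^{-\ri k\theta_j}$ as a bilinear form in the Dirichlet$(1,\ldots,1)$ weights $u_j:=\lv R_{1j}\lv^2$ and the CUE eigenangles $\theta_j$, independent of one another. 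Combining the Dirichlet moment identity $\E[u_j u_l]=(1+\de_{jl})/(N(N+1))$ with the Diaconis--Shahshahani formula $\E[\lv \tr U^k\lv^2]=\min(k,N)$ should yield the uniform bound $\E[\lv X_k\lv^2]\leq 2/(N+1)$.

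Substituting this into the expansion, the geometric sum in $\lv z\lv^2$ closes and the scaling $\lv \sqrt{N}-a\lv^2/(N+1)=O(1)$ produces
\[
\sup_N \E[\lv f_N(z)\lv^2]\leq C_1+C_2\cdot \frac{\lv z\lv^2}{1-\lv z\lv^2},
\]
which is bounded on every compact subset of $\D$. Proposition \ref{prop tight} then delivers tightness. The main obstacle is precisely the factor of $1/N$ gained on $\E[\lv X_k\lv^2]$: the trivial estimate $\lv X_k\lv\leq 1$ (valid since $(U^{*})^k$ is unitary) loses a factor $N$ once rescaled by $\lv \sqrt{N}-a\lv^2$, producing a bound that blows up with $N$. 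The cancellation in the spectral sum is a genuine statistical effect, and the role of the spectral decomposition is to make it accessible by decoupling the weights $u_j$ from the angles $\theta_j$, so that the Dirichlet and Diaconis--Shahshahani identities can be brought to bear.
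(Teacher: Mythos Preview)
Your proposal is correct and follows essentially the same route as the paper: both apply Proposition~\ref{prop tight} with $p=2$, reduce to $v=e_1$ by unitary invariance, use the spectral decomposition $U=R\Delta R^{*}$ together with the Dirichlet/Weingarten moment $\E[u_ju_l]=(1+\de_{jl})/(N(N+1))$ and the Diaconis--Shahshahani identity $\E[\lv\tr U^k\lv^2]=\min(k,N)$, and arrive at the same bound $\E[\lv\cdot\lv^2]\lesssim \lv z\lv^2/(1-\lv z\lv^2)$. The only organisational difference is that the paper first passes to $\widetilde f_N(z)=\sqrt{N}\sum_{k\ge1}X_kz^k$ and expands $\lv\widetilde f_N\lv^2$ via the resolvent $(I_N-zU^{*})^{-1}$ as a double sum over eigenvalue indices, whereas you stay with $f_N$ itself and first use the scalar rotational invariance $U\sim e^{\ri\phi}U$ to kill the cross-terms $\E[X_k\overline{X_l}]$ ($k\neq l$) before invoking the spectral decomposition on the remaining diagonal terms $\E[\lv X_k\lv^2]$; the resulting computations are the same.
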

\begin{proof}
    Define 
    \begin{equation*}
        \widetilde{f_N}(z):=\scalebox{1.0}{$\sqrt{N}$}\sum_{k\geq 1}v^{*}(U^{*})^kv z^k, \quad z\in \D.
    \end{equation*}
    Note that $f_N(z)=a-\widetilde{f_N}(z)+aN^{-1/2}\widetilde{f_N}(z)$. It is then enough to show the tightness of $\{\widetilde{f_N}(z)\}$. Recalling the series expansion of $(I_N -zU^{*})^{-1}$ we see that
    \begin{equation*}
        \widetilde{f_N}(z)=\scalebox{1.0}{$\sqrt{N}$}v^{*}(zU^{*})(I_N-zU^{*})^{-1}v=\sqrt{N}v^{*}zR \Delta ^{*}(I_N-z\Delta ^{*})^{-1}R^{*}v .
    \end{equation*}
    Therefore
    \begin{equation*}
        \widetilde{f_N}(z)=\sum_{k=1}^N \scalebox{1.0}{$\sqrt{N}$}\lv \langle v,r_k \rangle \lv^2\frac{z\re^{-\ri \theta_k}}{1-z\re^{-\ri \theta_k}}=\sum_{k=1}^N \scalebox{1.0}{$\sqrt{N}$}\lv \langle v,r_k \rangle\lv^2\sum_{l\geq 1}(z\re^{-\ri \theta_k})^l.
    \end{equation*}
    Therefore
    \begin{align*}
        \E[\lv \widetilde{f_N}(z)\lv^2]&=\E \sum_{k_1,k_2=1}^N N\lv \langle v,r_{k_1} \rangle\lv^2 \lv \langle v,r_{k_2} \rangle\lv^2\sum_{l_1,l_2\geq1}z^{l_1}\overline{z}^{l_2}\re^{-\ri l_1 \theta_{k_1}}\re^{\ri l_2 \theta_{k_2}}\\
        &= \sum_{k_1,k_2=1}^N N\E[\lv \langle v,r_{k_1} \rangle\lv^2 \lv \langle v,r_{k_2} \rangle\lv^2]\sum_{l\geq1}\lv z \lv^{2l}\E[\re^{-\ri l (\theta_{k_1}-\theta_{k_2})}]
    \end{align*}
    where the independence of the eigenvectors and eigenvalues justifies splitting the expectation while unitary invariance causes the term with $l_1 \neq l_2$ to vanish.\ The independence of $R$ and $v$, combined with the unitary invariance of $R$ implies that $\left(\langle v, r_{k_1}\rangle,\langle v,r_{k_2}\rangle \right)$ has the same distribution as $\left(\langle e_1, r_{k_1}\rangle,\langle e_1,r_{k_2}\rangle \right)$. By Weingarten calculus we have 
    \begin{equation*}
        \E[\lv \langle e_1,r_{k_1}\rangle\lv^2 \lv \langle e_1,r_{k_2}\rangle\lv^2]=\frac{1+\de_{k_1 k_2}}{N(N+1)}
    \end{equation*}
    (see for instance Proposition 4.2.3 in \cite{Petz}).\\
    For the eigenvalues we invoke the fact (Proposition 3.11 in \cite{Meckes}) that 
    \begin{equation*}
        \E[\lv \tr(U^l)\lv^2]=\min(l,N)
    \end{equation*}
    from which it follows in particular that if $k_1 \neq k_2$
    \begin{equation*}
        \E[\re^{-\ri l (\theta_{k_1}-\theta_{k_2})}]=\frac{\min(l,N)-N}{N(N-1)}=-\frac{(N-l)_{+}}{N(N-1)}.
    \end{equation*}
    Finally, by splitting the sum according to $k_1=k_2$ and $k_1 \neq k_2$, we deduce that
    \begin{align*}
        \E[\lv \widetilde{f_N}(z)\lv^2]&=\sum_{k=1}^N \frac{2N}{N(N+1)}\sum_{l\geq 1}\lv z \lv ^{2l}-\sum_{k_1\neq k_2}\frac{N}{N(N+1)}\sum_{l\geq 1}\lv z \lv^{2l}\frac{(N-l)_{+}}{N(N-1)}\\
        &\leq \frac{2N^2}{N(N+1)}\frac{\lv z \lv ^2}{1-\lv z \lv^2}
        \leq \frac{2 \lv z \lv^2}{1-\lv z \lv^2}.
    \end{align*}
     Consequently we find that 
    \begin{equation*}
        \sup_{N}\E[\lv \widetilde{f_N}(z)\lv^2]\leq \frac{2 \lv z \lv^2}{1-\lv z \lv^2}.
    \end{equation*}
    The function $z\mapsto \frac{2 \lv z \lv^2}{1-\lv z \lv^2}$ being locally integrable in $\D$, Proposition \ref{prop tight} allows us to conclude that the sequence $\{\widetilde{f_N}(z)\}$ is tight.
\end{proof}
We now gather our preceding arguments and prove Theorem \ref{Extension result}.
\begin{theorem}
    The eigenvalues of $UA$ converge to the zeros of $\phi_a$. More precisely, 
    \begin{equation*}
        \sum_{\la \in \Sp(UA)}\de_{\la}\xrightarrow[N \to \infty]{v}\sum_{w\in \mathcal{Z}_{\phi_a}}\de_w
    \end{equation*}
    where the zeros are repeated with multiplicity.
\end{theorem}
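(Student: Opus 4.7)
The plan is to assemble the ingredients already established in the section. By Lemma \ref{zeros eigenvalues}, the spectrum of $UA$ coincides (almost surely, thanks to Lemma \ref{spectrum in the disk} which confines it to $\D$) with the zero set of the random analytic function
\begin{equation*}
    f_N(z)=a-\left(\scalebox{0.9}{$\sqrt{N}$}-a\right)\sum_{k\geq 1}\bigl(v^{*}(U^{*})^kv\bigr)z^k, \quad z\in \D.
\end{equation*}
Hence the empirical measure $\sum_{\la \in \Sp(UA)} \de_{\la}$ is almost surely equal to the empirical measure of zeros of $f_N$, and it suffices to prove vague convergence of the latter.

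First I would promote the finite-dimensional convergence of coefficients to convergence in $H(\D)$. Lemma \ref{Tightness function} provides tightness of $\{f_N\}$ in $H(\D)$, and Lemma \ref{convergence of the coefficient} provides convergence of the coefficients of $f_N$ in the sense of finite-dimensional distributions to the coefficients $(a,-c_1,-c_2,\ldots)$ of $\phi_a$. Applying Proposition \ref{limit thm} to the pair (tightness, finite-dimensional convergence) yields
\begin{equation*}
    f_N \xrightarrow[N\to\infty]{d} \phi_a \quad \text{in } H(\D).
\end{equation*}

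Next I would invoke Proposition \ref{zeros convergence} to transfer this convergence to the level of empirical measures of zeros. The hypothesis required there is that the limit $\phi_a$ is almost surely not identically zero, which is immediate here: the constant coefficient of $\phi_a$ is the deterministic nonzero value $a \in \C^{*}$, so $\phi_a(0)=a\neq 0$ almost surely. Consequently
\begin{equation*}
    \sum_{w \in \mathcal{Z}_{f_N}} \de_w \xrightarrow[N \to \infty]{v} \sum_{w \in \mathcal{Z}_{\phi_a}} \de_w,
\end{equation*}
and combining this with the identification of $\mathrm{Sp}(UA)$ and $\mathcal{Z}_{f_N}$ from the first step concludes the proof.

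The proof is essentially a bookkeeping step assembling prior lemmas, so there is no real obstacle: all the technical content (the algebraic rewriting of the characteristic equation, the second moment estimate powering tightness via Weingarten and CUE trace identities, and the analytic-function continuity of the zero map) has been discharged beforehand. The only point requiring a moment of care is making explicit that Lemma \ref{spectrum in the disk} justifies restricting the determinantal identity to $|z|<1$ before the power series expansion, so that the eigenvalues inside $\D$ genuinely coincide, as multisets with multiplicities, with the zeros of $f_N$ in $\D$.
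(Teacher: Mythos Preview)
Your proof is correct and follows exactly the same assembly as the paper's own argument: identify $\mathrm{Sp}(UA)$ with the zeros of $f_N$ via Lemma \ref{zeros eigenvalues}, combine Lemma \ref{convergence of the coefficient} and Lemma \ref{Tightness function} through Proposition \ref{limit thm} to get $f_N \xrightarrow{d} \phi_a$ in $H(\D)$, and conclude with Proposition \ref{zeros convergence}. Your additional remarks (the explicit check that $\phi_a(0)=a\neq 0$ so that Proposition \ref{zeros convergence} applies, and the role of Lemma \ref{spectrum in the disk} in legitimizing the power-series rewriting) are welcome clarifications that the paper leaves implicit.
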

\begin{proof}
    By Lemma \ref{zeros eigenvalues}, we know that the eigenvalues of $UA$ are precisely the zeros of $f_N$. Lemma \ref{convergence of the coefficient} establishes the finite-dimensional convergence of $\{f_N\}$ towards $\phi_{a}$, while Lemma \ref{Tightness function} ensures the tightness of $\{f_N\}$. Consequently, Proposition \ref{limit thm} implies that $\{f_N\}$ converges in law towards $\phi_{a}$. Finally, using Proposition \ref{zeros convergence} we conclude the desired convergence of the eigenvalues. 
\end{proof}

\bigskip
\section{Study of a unitarily invariant model with i.i.d eigenvalues}\label{Section 3}
We now consider a model previously studied in \cite{DubRek}. Throughout this section $M$ is defined as 
\begin{equation}\label{deuxieme modele}
    M=VDV^{*}
\end{equation}
where $V$ and $D$ are independent square matrix of size $N$, $V$ is a Haar-distributed matrix and $D=\diag(Z_1,\ldots,Z_N)$ with $\{Z_k\}$ i.i.d on the unit circle such that $\sup_{k \geq 1} \lvert \E [Z_1^k]\lvert = o(N^{-b})$ for any integer $b$ and such that the spectrum of $M$ is simple almost surely. Recall the definition of  $A=I_N-(1-aN^{-1/2})vv^*$ where $a\in \C^*$ and $v$ is a unit vector that can be random as long as it is independent of $M$ and distributed with a non-vanishing density on $S_{N-1}(\C)$.\\
Following the same strategy as in Section \ref{Section 2} we prove that the eigenvalues of $MA$ converge to the zeros of the following random analytic function 
\begin{equation}\label{function g}
    \phi_{a'}(z)= a' - \sum_{k\geq1}c_kz^k, \ \ a'=\frac{a}{\sqrt{2}}
\end{equation}
where the $\{c_k\}$ are complex independent standard Gaussian random variables.
\begin{lemma}
    The spectrum of $MA$ lies almost surely in the open unit disk.
\end{lemma}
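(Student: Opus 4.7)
My plan is to imitate the proof of Lemma \ref{spectrum in the disk}, exactly as hinted at in Remark \ref{a.s spectrum}. The only structural requirement from that proof is that, almost surely, no eigenvector of the unitary matrix being perturbed lies in a fixed subspace of positive codimension; the task is to check that this remains true for $M = VDV^*$.

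First I would note that $M$ is unitary, since $V$ is unitary and $D$ is diagonal with unimodular entries. Hence $(MA)^*(MA) = A^*A = I_N - (1 - |a|^2 N^{-1}) vv^*$, exactly as in the Haar case. If $\lambda \in \Sp(MA)$ has unit eigenvector $w$, expanding $|\lambda|^2 = \langle A^*Aw, w\rangle$ in an orthonormal basis diagonalizing $A^*A$ (so that $vv^*$ contributes only the first coordinate) gives
\begin{equation*}
|\lambda|^2 = \frac{|a|^2}{N}|\widetilde{w}_1|^2 + \sum_{k=2}^{N}|\widetilde{w}_k|^2 \leq 1,
\end{equation*}
with equality forcing $\widetilde{w}_1 = 0$. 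Then $vv^*w = 0$, so $Aw = w$ and $\lambda w = Mw$; in other words, $w$ is an eigenvector of $M$ orthogonal to $v$.

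It remains to verify that this final configuration has probability zero. Because $M$ has simple spectrum almost surely, its eigenvectors are uniquely determined up to phase and coincide with the columns of $V$ (indeed $M(Ve_i) = VDe_i = Z_i(Ve_i)$). Since $V$ is Haar-distributed and independent of $v$, each column of $V$ is marginally uniformly distributed on $S_{N-1}(\C)$ and independent of $v$. Conditionally on $v$, the event that some column of $V$ lies in the hyperplane $v^\perp$ has probability zero, as $v^\perp$ has vanishing surface measure on the sphere; integrating over $v$ finishes the argument. I do not anticipate any real obstacle: everything except the final probability-zero statement is a verbatim transcription of Lemma \ref{spectrum in the disk}, and that last step reduces to the standard fact that a uniform vector on $S_{N-1}(\C)$ almost surely avoids any fixed proper linear subspace.
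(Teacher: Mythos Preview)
Your proposal is correct and follows essentially the same route as the paper: the paper's proof simply observes that the eigenvectors of $M$ are the columns of $V$, hence (being columns of a Haar matrix) almost surely avoid any subspace of positive codimension, and then defers verbatim to Lemma~\ref{spectrum in the disk}. Your write-up is a slightly more detailed version of the same argument, including the explicit conditioning on $v$ and the invocation of the simple-spectrum hypothesis to identify the eigenvectors with the columns of $V$.
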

\begin{proof}
    Note that the eigenvectors of $M$ are the columns of $V$ and so the probability of one of them lying in a subspace of positive codimension is zero. The rest of the proof is exactly the same as in Lemma \ref{spectrum in the disk}.
\end{proof}
\begin{lemma}\label{zeros eigenvalues v2}
    The eigenvalues of $MA$ are exactly the zeros of the following random analytic function 
    \begin{equation*}
        g_N(z):=a'-\left(\sqrt{\frac{N}{2}} - a'\right)\sum_{k\geq 1}v^{*}(U^{*})^k v z^k, \ z\in \D.
    \end{equation*}
\end{lemma}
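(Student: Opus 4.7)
The plan is to mimic the argument of Lemma \ref{zeros eigenvalues} almost verbatim, with two adjustments: since $M=VDV^*$ is unitary ($D$ has entries on the unit circle), we may use $M^{-1}=M^*$; and we rescale by $\sqrt{N/2}$ rather than $\sqrt{N}$ in order to produce the constant $a'=a/\sqrt{2}$.

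First, by the preceding lemma the spectrum of $MA$ lies almost surely in the open unit disk, so it suffices to characterize eigenvalues $z$ with $|z|<1$. For such $z$, I would write
\begin{equation*}
\det(MA - zI_N) = \det\bigl(M-zI_N - (1-aN^{-1/2})Mvv^*\bigr),
\end{equation*}
factor out $\det(M-zI_N)$ (which is nonzero because $M$ is unitary, so $\det(M-zI_N)$ vanishes only on the unit circle), and apply Sylvester's identity to reduce the vanishing condition to
\begin{equation*}
1 - (1-aN^{-1/2})\, v^*(M-zI_N)^{-1}Mv = 0.
\end{equation*}

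Next, since $M$ is unitary, $(M-zI_N)^{-1}M = (I_N - zM^*)^{-1}$, and because $|z|<1$ while $M^*$ has operator norm $1$, the Neumann expansion $(I_N-zM^*)^{-1}=\sum_{k\geq 0}z^k(M^*)^k$ converges. Substituting and isolating the $k=0$ term gives the equation
\begin{equation*}
aN^{-1/2} - (1-aN^{-1/2})\sum_{k\geq 1} v^*(M^*)^k v\, z^k = 0.
\end{equation*}
Multiplying through by $\sqrt{N/2}$ identifies the constant term as $a/\sqrt{2}=a'$ and the linear prefactor as $\sqrt{N/2}-a'$, which is exactly $g_N(z)=0$ (reading the $U^*$ of the statement as $M^*$, matching the parallel with Section \ref{Section 2}).

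There is no real obstacle here beyond bookkeeping: the only conceptual ingredient beyond the $UA$ case is that $M$ is itself unitary, which ensures both the invertibility of $M-zI_N$ for $|z|<1$ and the validity of the Neumann expansion. The factor $\sqrt{2}$ in the normalization is forced by the desired shape of the limiting function $\phi_{a'}$, and its source will only become meaningful in the next lemma when computing the limiting variance of the coefficients (where the i.i.d.\ eigenvalue assumption produces a different Weingarten-type covariance than in the Haar case); at the level of this lemma it is simply a choice of rescaling.
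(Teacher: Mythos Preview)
Your proof is correct and follows essentially the same route as the paper's own argument, which simply refers back to the computation of Lemma~\ref{zeros eigenvalues} and then multiplies by $\sqrt{N/2}$ instead of $\sqrt{N}$. Your added remarks making explicit that $M$ is unitary (so $M-zI_N$ is invertible for $|z|<1$ and $(M-zI_N)^{-1}M=(I_N-zM^*)^{-1}$) and your observation that the $U^*$ in the statement should be read as $M^*$ are both on point.
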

\begin{proof}
    Doing exactly the same as in the proof of Lemma \ref{zeros eigenvalues} we arrive at 
    \begin{equation*}
        z \in \Sp(MA) \iff 1 - \left(1-aN^{-1/2}\right)\sum_{k\geq 0}v^{*}(U^{*})^kvz^k=0.
    \end{equation*}
    By multiplying both sides by $\sqrt{\frac{N}{2}}$ we find that $z$ is in the spectrum of $MA$ if and only if 
    \begin{equation*}
        g_N(z)=0
    \end{equation*}
    which concludes the proof.
\end{proof}
The following lemma resolves the issue of coefficients convergence.
\begin{lemma}\label{convergence coeff v2}
    The following convergence in law holds true 
    \begin{equation*}
        \sqrt{\frac{N}{2}}(e_1^{*}U^{*}e_1,e_1^{*}(U^{*})^2e_1,\ldots)\xrightarrow[N \to \infty]{d}(c_1,c_2,\ldots).
    \end{equation*}
    In particular the coefficients of $\{g_N\}$ converge to the coefficients of $\phi_{a'}$.
\end{lemma}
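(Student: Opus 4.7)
The plan is to compute $e_1^*(M^*)^k e_1$ using the spectral decomposition $M = V D V^*$, which yields
\[
e_1^*(M^*)^k e_1 \;=\; \sum_{j=1}^{N} |V_{1j}|^2 \, \overline{Z_j}^{\,k},
\]
and then to linearize the first row of the Haar matrix $V$ via the standard Gaussian representation $V_{1j} = g_j/\|g\|$, where $g_1,\ldots,g_N$ are i.i.d.\ standard complex Gaussians independent of $D$. Since $\|g\|^2/N \to 1$ almost surely by the strong law, Slutsky's theorem reduces the problem to proving, jointly for $k = 1, \ldots, m$, the convergence
\[
W_k^{(N)} \;:=\; \frac{1}{\sqrt{2N}} \sum_{j=1}^{N} |g_j|^2 \, \overline{Z_j}^{\,k} \xrightarrow[N \to \infty]{d} c_k.
\]

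The second step is a multivariate central limit theorem applied to the i.i.d.\ vectors $\xi_j := \bigl(|g_j|^2 \overline{Z_j}^{\,k}\bigr)_{k=1}^{m} \in \mathbb{C}^{m}$. Setting $\mu_r := \E[Z_1^r]$ and using $\E[|g_j|^2] = 1$ and $\E[|g_j|^4] = 2$, one computes the means $\E[\xi_j^{(k)}] = \overline{\mu_k}$, the Hermitian covariances $\E[\xi_j^{(k)} \overline{\xi_j^{(l)}}] = 2\mu_{l-k}$, and the pseudo-covariances $\E[\xi_j^{(k)} \xi_j^{(l)}] = 2\overline{\mu_{k+l}}$. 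Under the standing hypothesis $\sup_{r \geq 1}|\mu_r| = o(N^{-b})$ for every integer $b$, the deterministic mean contribution $\sqrt{N}\,\overline{\mu_k}$ to $W_k^{(N)}$, every off-diagonal Hermitian covariance, and every pseudo-covariance vanish in the limit, while the diagonal Hermitian covariances remain equal to $2$. The complex CLT therefore yields a centered circularly symmetric Gaussian limit with covariance $2 I_m$ and zero pseudo-covariance, which coincides in distribution with $\sqrt{2}(c_1, \ldots, c_m)$; the factor $1/\sqrt{2}$ absorbed into the normalization of $W_k^{(N)}$ then produces exactly $(c_1, \ldots, c_m)$.

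I expect the main obstacle to be the asymptotic decorrelation between distinct coordinates, controlled by the off-diagonal covariances $\mu_{l-k}$ with $l \neq k$ and by the pseudo-covariances $\mu_{k+l}$: both decay critically thanks to the strong moment assumption on $D$, without which the $W_k^{(N)}$ would remain genuinely correlated in the limit and the conclusion would fail. Once finite-dimensional convergence is in hand, the ``in particular'' clause follows exactly as in the proof of Lemma \ref{convergence of the coefficient}: independence of $v$ and $M$ together with the unitary invariance of $M$ allows one to replace $v$ by $e_1$ in distribution, and Slutsky's theorem applied to the decomposition $P_k^{(N)} = -\sqrt{N/2}\, v^*(M^*)^k v + a'\, v^*(M^*)^k v$ finishes the argument.
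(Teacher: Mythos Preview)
Your approach is correct and substantially different from the paper's. The paper proceeds via the method of moments: writing $S_n = \sqrt{N/2}\sum_i |v_{1i}|^2 Z_i^n$, it expands every mixed moment $\E[S_{n_1}^{p_1}\cdots\overline{S_{n'_r}^{q_r}}]$, handles the Haar factor with Weingarten calculus (Lemma~\ref{weingarten}) and the diagonal factor with the decay hypothesis on $\E[Z_1^k]$, and matches the limit to the Gaussian moments through a lengthy combinatorial argument. Your route---replacing the first row of $V$ by $g/\|g\|$ with $g$ a standard complex Gaussian vector, peeling off $\|g\|^2/N$ via the strong law and Slutsky, and then applying a CLT to the i.i.d.\ summands $|g_j|^2\overline{Z_j}^{\,k}$---is considerably shorter and avoids Weingarten calculus entirely. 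The paper's computation, by contrast, is more uniform with the Weingarten-based tightness estimates in the surrounding section and yields exact moment asymptotics along the way.

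One point deserves explicit mention in your write-up: since the law of $Z_1$ is allowed to depend on $N$ (e.g.\ the wrapped normal example with $\sigma^2 = N^m$), the vectors $\xi_j$ form a triangular array rather than a single i.i.d.\ sequence, so the CLT you invoke must be of Lindeberg--Feller type with covariance matrix converging to $2I_m$. This is harmless here because $|\xi_j^{(k)}| = |g_j|^2$ has moments of all orders that do not depend on $N$, so a Lyapunov condition is immediate; but you should state this rather than appeal to a plain multivariate CLT for a fixed distribution.
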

The convergence will be proven with the moments' method.\ The fact that it implies the convergence of the coefficients of $\{g_N\}$ is exactly the same argument as in Lemma \ref{convergence of the coefficient} and we do not repeat it. Due to the technical nature of the proof, and for the sake of readability, the detailed argument is deferred at the end of the section.\\
It remains to show the tightness of $\{g_N\}$. The following lemma is proved in a manner analogous to Lemma \ref{Tightness function}. We denote by $\widetilde{v_1},\ldots,\widetilde{v_N}$ the columns of $V$.
\begin{lemma}\label{tightness v2}
    The sequence of random analytic functions $\{g_N\}$ is tight in $H(\D)$.
\end{lemma}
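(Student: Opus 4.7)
My plan is to mirror the proof of Lemma \ref{Tightness function}, adapted to the new randomness structure. First, I would set
\begin{equation*}
    \widetilde{g_N}(z) := \sqrt{\tfrac{N}{2}}\sum_{k\geq 1} v^*(M^*)^k v\, z^k, \qquad z \in \D,
\end{equation*}
and observe that $g_N(z) = a' - \bigl(1-a'/\sqrt{N/2}\bigr)\widetilde{g_N}(z)$, so tightness of $\{g_N\}$ reduces to tightness of $\{\widetilde{g_N}\}$. Using $M = VDV^*$ and summing the geometric series, I would rewrite
\begin{equation*}
    \widetilde{g_N}(z) = \sqrt{\tfrac{N}{2}} \sum_{k=1}^N \lv \langle v, \widetilde{v_k}\rangle\lv^2 \sum_{l\geq 1}(z\overline{Z_k})^l,
\end{equation*}
so that tightness can be obtained, via Proposition \ref{prop tight}, by controlling $\sup_N \E[\lv\widetilde{g_N}(z)\lv^2]$ on compact subsets of $\D$.

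Squaring and taking expectation produces a double sum indexed by $(k_1, k_2)$ and $(l_1, l_2)$. The independence of $V$ and $D$ lets me split the expectation into a Weingarten factor $\E[\lv \langle v,\widetilde{v_{k_1}}\rangle\lv^2 \lv \langle v,\widetilde{v_{k_2}}\rangle\lv^2] = (1+\de_{k_1 k_2})/[N(N+1)]$ (using independence of $v$ and $V$ together with unitary invariance, exactly as in Lemma \ref{Tightness function}), and a factor $\E[\overline{Z_{k_1}}^{\,l_1} Z_{k_2}^{\,l_2}]$. The key new point is the analysis of this second factor. When $k_1 \neq k_2$, independence yields $\overline{\E[Z^{l_1}]}\,\E[Z^{l_2}]$, and each factor is $o(N^{-b})$ for every $b$ by hypothesis, so the resulting contribution is negligible. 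When $k_1 = k_2$, the factor becomes $\E[Z^{l_2-l_1}]$, which equals $1$ for $l_1 = l_2$ and is again $o(N^{-b})$ otherwise.

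Thus only the diagonal $k_1 = k_2$, $l_1 = l_2$ term contributes to leading order, giving
\begin{equation*}
    \sum_{k=1}^N \frac{N}{2} \cdot \frac{2}{N(N+1)} \sum_{l\geq 1} \lv z \lv^{2l} \;\leq\; \frac{\lv z \lv^2}{1-\lv z \lv^2},
\end{equation*}
while all remaining terms, bounded by $o(N^{-b})\cdot \lv z \lv^2/(1-\lv z\lv)^2$, vanish uniformly on compact subsets of $\D$. Hence $\sup_N \E[\lv\widetilde{g_N}(z)\lv^2]$ is dominated by a function locally integrable on $\D$, and Proposition \ref{prop tight} finishes the argument.

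The main obstacle is bookkeeping the four cases $(k_1 = k_2$ vs.\ $k_1 \neq k_2) \times (l_1 = l_2$ vs.\ $l_1 \neq l_2)$ and confirming that precisely one of them contributes to leading order, with the moment-decay assumption $\sup_{k \geq 1}\lv \E[Z_1^k]\lv = o(N^{-b})$ killing every cross term; no single step is delicate, but one must be careful that the error terms do not accumulate a factor growing with $N$ that defeats the polynomial decay.
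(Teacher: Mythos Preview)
Your proposal is correct and follows essentially the same route as the paper: reduce to $\widetilde{g_N}$, expand via $M=VDV^*$, split the second moment using independence of $V$ and $D$, apply the Weingarten identity for the eigenvector factor, and use the moment-decay hypothesis on the $Z$'s to kill every term except the diagonal $k_1=k_2$, $l_1=l_2$, then conclude with Proposition~\ref{prop tight}. The only cosmetic difference is that the paper normalizes $\widetilde{g_N}$ with $\sqrt{N}$ rather than $\sqrt{N/2}$, which of course changes nothing.
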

\begin{proof}
    Define 
    \begin{equation*}
        \widetilde{g_N}(z):=\scalebox{1.0}{$\sqrt{N}$}\sum_{k\geq 1}v^{*}(U^{*})^kv z^k, \quad z\in \D.
    \end{equation*}
    Note that $g_N(z)=a'-\frac{\widetilde{g_N}(z)}{\sqrt{2}} + a'N^{-1/2}\widetilde{g_N}(z).$ It is then enough to show the tightness of $\{\widetilde{g_N}\}$. Recalling the series expansion of $(I_N-zU^{*})^{-1}$ we see that 
    \begin{equation*}
        \widetilde{g_N}(z)=\scalebox{1.0}{$\sqrt{N}$}v^{*}(zU^{*})(I_N-zU^{*})^{-1}v=\sqrt{N}v^{*}zV D ^{*}(I_N-zD^{*})^{-1}V^{*}v .
    \end{equation*}
    Therefore 
    \begin{equation*}
        \widetilde{g_N}(z)=\sqrt{N}\sum_{k=1}^N \lvert \langle v,\widetilde{v_k} \rangle\lvert ^2 \frac{z \overline{Z_k}}{1-z\overline{Z_k}}=\sqrt{N}\sum_{k=1}^N \lvert \langle v,\widetilde{v_k}\rangle\lvert ^2 \sum_{l\geq1}(z\overline{Z_k})^l.
    \end{equation*}
    This yields that 
    \begin{align*}
        \E[\lvert \widetilde{g_N}(z)\lvert^2]&= \E \sum_{k_1,k_2=1}^N N \lvert \langle v,\widetilde{v_{k_1}} \rangle\lvert ^2 \lvert \langle v,\widetilde{v_{k_2}} \rangle\lvert^2 \sum_{l_1,l_2 \geq 1} z^{l_1}\overline{z}^{l_2} \overline{Z_{k_1}}^{l_1}Z_{k_2}^{l_2}\\
        &=\sum_{k_1,k_2=1}^N N \E [\lvert \langle v,\widetilde{v_{k_1}} \rangle\lvert ^2 \lvert \langle v,\widetilde{v_{k_2}} \rangle\lvert^2 ]\sum_{l_1,l_2 \geq 1} z^{l_1}\overline{z}^{l_2} \E[\overline{Z_{k_1}}^{l_1}Z_{k_2}^{l_2}]\\ 
    \end{align*}
    The independence of $V$ and $v$, combined with the unitary invariance of $V$ implies that $\left(\langle v, \widetilde{v_{k_1}}\rangle,\langle v,\widetilde{v_{k_2}}\rangle \right)$ has the same distribution as $\left(\langle e_1, r_{k_1}\rangle,\langle e_1,r_{k_2}\rangle \right)$. By Weingarten calculus we have 
    \begin{equation*}
        \E[\lv \langle e_1,r_{k_1}\rangle\lv^2 \lv \langle e_1,r_{k_2}\rangle\lv^2]=\frac{1+\de_{k_1 k_2}}{N(N+1)}
    \end{equation*}
    (see for instance Proposition 4.2.3 in \cite{Petz}). 
    Therefore
    \begin{align*}
        \E[\lvert \widetilde{g_N}(z)\lvert^2]&\leq \sum_{k=1}^N \frac{2}{N+1}\left( \sum_{l\geq 1}\lvert z \lvert^{2l} \ \ + \sum_{l_1 \neq l_2} \lvert z \lvert ^{l_1+l_2}\sup_{m\geq 1}\lvert \E [Z_1^m]\lvert \right)\\
        &+\sum_{1\leq k_1 \neq k_2 \leq N} \frac{1}{N+1} \sum_{l_1,l_2 \geq 1} \lvert z \lvert ^{l_1+l_2} \left(\sup_{m\geq 1}\lvert \E [Z_1^m]\lvert \right)^2\\
        &\leq \frac{2\lvert z \lvert ^2}{1-\lvert z \lvert ^2} + C\left(\frac{\lvert z \lvert}{1- \lvert z \lvert}\right)^2 
    \end{align*}
    where $C$ is a constant. Consequently we find that 
    \begin{equation*}
        \sup_{N}\E[\lvert \widetilde{g_N}(z)\lvert ^2]\leq \frac{2\lvert z \lvert ^2}{1-\lvert z \lvert ^2} + C\left(\frac{\lvert z \lvert}{1- \lvert z \lvert}\right)^2 .
    \end{equation*}
    The function $z \mapsto \frac{2\lvert z \lvert ^2}{1-\lvert z \lvert ^2} + C\left(\frac{\lvert z \lvert}{1- \lvert z \lvert}\right)^2 $ being locally integrable in $\D$, Proposition \ref{prop tight} allows us to conclude that the sequence $\{\widetilde{g_N}\}$ is tight.
\end{proof}
We now gather our preceding arguments and prove Theorem \ref{extension bis}.
\begin{theorem}\label{iid model result}
    The eigenvalues of $MA$, where $M=VDV^{*}$, converge to the zeros of $\phi_{a'}$. More precisely 
    \begin{equation*}
        \sum_{\la \in \Sp(MA)}\de_{\la} \xrightarrow[N \to \infty]{v}\sum_{w \in \mathcal{Z}_{\phi_{a'}}} \de_w
    \end{equation*}
    where the zeros are repeated with multiplicity.
\end{theorem}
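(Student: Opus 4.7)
The plan is to follow exactly the same three-step scheme that produced Theorem \ref{Extension result} in Section \ref{Section 2}, using the lemmas already assembled for the $M=VDV^*$ model. First, by Lemma \ref{zeros eigenvalues v2}, the eigenvalues of $MA$ are almost surely the zeros of the random analytic function $g_N$. So to control the limiting point process of eigenvalues, it suffices to control the limit in law of $\{g_N\}$ as a sequence in $H(\D)$.

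Second, I would apply Proposition \ref{limit thm} to the sequence $\{g_N\}$. Its two hypotheses are finite-dimensional convergence of the coefficients and tightness. The first is precisely the content of Lemma \ref{convergence coeff v2}, which yields $(P_0^{(N)},\ldots,P_m^{(N)}) \xrightarrow{d} (a',-c_1,\ldots,-c_m)$ for every $m$. The second is precisely Lemma \ref{tightness v2}. Together they give
\begin{equation*}
g_N \xrightarrow[N \to \infty]{d} \phi_{a'} \quad \text{in } H(\D).
\end{equation*}

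Third, I would apply Proposition \ref{zeros convergence} to transfer this convergence of random analytic functions to vague convergence of their zero sets. To do so I must check that $\phi_{a'}$ is almost surely not identically zero: this is immediate, since its constant coefficient is $a' = a/\sqrt{2} \neq 0$ by assumption. Combining the equality of point processes from Lemma \ref{zeros eigenvalues v2} with the vague convergence of the zero measures of $g_N$ to the zero measure of $\phi_{a'}$ then yields the stated convergence.

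There is essentially no new obstacle at this stage: all the substantive work (the identification of eigenvalues as zeros, the moment-method proof behind Lemma \ref{convergence coeff v2}, and the second-moment estimate in Lemma \ref{tightness v2} that exploits the rapid decay $\sup_{k\ge 1}|\E[Z_1^k]| = o(N^{-b})$) has been done in the preceding lemmas. The only minor point to verify is the non-triviality of the limit $\phi_{a'}$, which is immediate from $a \in \C^*$. Thus the proof is a clean assembly of Lemmas \ref{zeros eigenvalues v2}, \ref{convergence coeff v2} and \ref{tightness v2} through Propositions \ref{limit thm} and \ref{zeros convergence}, mirroring step by step the argument given at the end of Section \ref{Section 2}.
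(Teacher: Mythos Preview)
Your proposal is correct and follows exactly the paper's own proof: assemble Lemma \ref{zeros eigenvalues v2}, Lemma \ref{convergence coeff v2}, and Lemma \ref{tightness v2} via Proposition \ref{limit thm} and then Proposition \ref{zeros convergence}. The only addition you make is the explicit remark that $\phi_{a'}$ is almost surely nonzero because $a'\neq 0$, which the paper leaves implicit.
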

\begin{proof}
    By Lemma \ref{zeros eigenvalues v2}, we know that the eigenvalues of $MA$ are precisely the zeros of $g_N$. Lemma \ref{convergence coeff v2} establishes the finite-dimensional convergence of $\{g_N\}$ towards $\phi_{a'}$, while Lemma \ref{tightness v2} ensures the tightness of $\{g_N\}$. Consequently, Proposition \ref{limit thm} implies that $\{g_N\}$ converges in law towards $\phi_{a'}$. Finally, using Proposition \ref{zeros convergence} we conclude the desired convergence.
\end{proof}

We now prove Lemma \ref{convergence coeff v2}. We first introduce some notations. For $p \in \N$ we note $[p]:=\{1,\ldots,p\}.$ For $i=(i(1),\ldots,i(p))$ a $p$-uplet of numbers in $[N]$ we denote by $\#i$ the cardinal of the set $i([p])$. In the proof we shall make use of the following result on the joint moments of entries of a unitary matrix (see \cite{Speicher} page 381 and \cite{Krishnapur} Result 15).
\begin{lemma}\label{weingarten}
    Let $V=(v_{ij})_{i,j\leq N}$ be a Haar matrix. Let $k\leq N$ and fix $i(l),j(l),i'(l),j'(l)$ for $1\leq l \leq k$. Then 
    \begin{equation*}
        \E \left[  \prod_{l=1}^k v_{i(l)j(l)}\prod_{l=1}^k \overline{v}_{i'(l)j'(l)}  \right]=\sum_{\pi,\sigma \in \mathfrak{S}_k}\mathrm{Wg}(N,\pi \sigma^{-1})\prod_{l=1}^k \mathds{1}_{i(l)=i'(\pi(l))}\mathds{1}_{j(l)=j'(\sigma(l))}
    \end{equation*}
    where the Weingarten function $\mathrm{Wg}$ has the property that as $N\rightarrow \infty$
    \[\mathrm{Wg}(N, \tau) = 
\begin{cases} 
N^{-k} + O(N^{-k-1}) & \text{if } \tau = e \, (\text{``identity''}). \\
O(N^{-k-1}) & \text{if } \tau \neq e.
\end{cases}\]

\end{lemma}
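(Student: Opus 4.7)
The plan is to derive the identity from the representation theory of the unitary group and then extract the asymptotics from the Weyl dimension formula, along the classical Collins--\'Sniady route that underlies the references cited in the statement. The central object is the averaged tensor
\[
T := \E\bigl[V^{\otimes k} \otimes \overline{V}^{\otimes k}\bigr],
\]
whose entries indexed by the multi-indices $(i, i', j, j')$ are exactly the joint moments in question. The proof then splits naturally into an \emph{algebraic} step producing the structural formula, and an \emph{analytic} step extracting the asymptotics of $\Wg$.

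\textbf{Structural identity via Schur--Weyl.} The bi-invariance of Haar measure, $V \disteq UVW$ for all $U, W \in U(N)$, translates into invariance of $T$ under the actions of $U^{\otimes k} \otimes \overline{U}^{\otimes k}$ on the ``row'' multi-indices $(i, i')$ and of $W^{\otimes k} \otimes \overline{W}^{\otimes k}$ on the ``column'' multi-indices $(j, j')$. Schur--Weyl duality identifies the commutant of the action $U \mapsto U^{\otimes k} \otimes \overline{U}^{\otimes k}$ on $(\C^N)^{\otimes k} \otimes (\C^N)^{\otimes k} \cong \mathrm{End}((\C^N)^{\otimes k})$ with the algebra spanned by the permutation operators $P_\pi$, $\pi \in \mathfrak{S}_k$, which are linearly independent as soon as $k \leq N$. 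Applying this separately on the row and column sides forces
\[
T_{(i, i'), (j, j')} = \sum_{\pi, \sigma \in \mathfrak{S}_k} c_{\pi, \sigma}\, \prod_{l=1}^k \mathds{1}_{i(l) = i'(\pi(l))} \prod_{l=1}^k \mathds{1}_{j(l) = j'(\sigma(l))},
\]
and a further use of the simultaneous permutation symmetry of the tensor factors shows $c_{\pi, \sigma}$ depends only on $\pi \sigma^{-1}$; I call this function $\Wg(N, \cdot)$.

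\textbf{Determining and estimating $\Wg$.} To pin down $\Wg$, contract the structural identity with $P_\alpha \otimes P_\beta$ on both sides: using the elementary trace identity $\tr(P_\alpha P_\beta^{-1}) = N^{\#(\alpha \beta^{-1})}$, where $\#\tau$ denotes the number of cycles of $\tau$, identifies $\Wg$ as the convolution inverse (on the group algebra $\C[\mathfrak{S}_k]$) of the Gram function $\tau \mapsto N^{\#\tau}$. Expanding in the irreducible character basis of $\mathfrak{S}_k$ yields the explicit formula
\[
\Wg(N, \tau) = \frac{1}{k!} \sum_{\lambda \vdash k} \frac{(\dim \lambda)^2}{s_\lambda(1^N)}\, \chi^\lambda(\tau),
\]
where $s_\lambda(1^N)$ is the Schur polynomial at the identity of $U(N)$. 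The Weyl dimension formula gives $s_\lambda(1^N) = N^k / H_\lambda + O(N^{k-1})$ with $H_\lambda$ the hook-length product, and the character orthogonality relation $\frac{1}{k!}\sum_\lambda (\dim \lambda)^2 \chi^\lambda(\tau) = k!\, \mathds{1}_{\tau = e}$ collapses the $N^{-k}$-contributions unless $\tau = e$, leaving $\Wg(N, e) = N^{-k} + O(N^{-k-1})$ and $\Wg(N, \tau) = O(N^{-k-1})$ otherwise.

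\textbf{Main obstacle.} The delicate point is the uniform control of the subleading terms in $1/s_\lambda(1^N)$ as $\lambda$ ranges over partitions of $k$: the $O(N^{-k-1})$ remainder must be uniform in $\lambda$ so that the cancellation via character orthogonality survives the finite sum. This is the nontrivial computation carried out in Collins' original derivation, and for the qualitative estimate needed here (as opposed to the sharper $\Wg(N, \tau) = O(N^{-k - |\tau|})$ involving the minimal transposition length) the argument simplifies considerably. Since the result is entirely classical, the most economical option is to invoke \cite{Speicher} and \cite{Krishnapur} as done in the statement.
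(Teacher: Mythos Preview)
The paper does not prove this lemma at all: it is quoted as a known fact with references to \cite{Speicher} p.~381 and \cite{Krishnapur} Result~15, and is then used as a black box in the moment computation that follows. Your sketch is precisely the Collins--\'Sniady derivation that underlies those references, so at the level of strategy there is nothing to compare --- you are reconstructing what the paper simply cites, and you acknowledge as much in your final sentence.

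That said, two of your explicit formulas are off and, taken together, do not produce the stated asymptotic. The character expansion of the Weingarten function carries a prefactor $1/(k!)^2$, not $1/k!$:
\[
\Wg(N,\tau)=\frac{1}{(k!)^2}\sum_{\lambda\vdash k}\frac{(f^{\lambda})^2}{s_\lambda(1^N)}\,\chi^\lambda(\tau),\qquad f^\lambda:=\dim\lambda,
\]
and the relevant orthogonality relation is column orthogonality $\sum_{\lambda\vdash k} f^\lambda\,\chi^\lambda(\tau)=k!\,\mathds{1}_{\tau=e}$, not the identity $\tfrac{1}{k!}\sum_\lambda (f^\lambda)^2\chi^\lambda(\tau)=k!\,\mathds{1}_{\tau=e}$ that you wrote (the latter is false already for $k=3$, $\tau=(123)$). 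With the correct normalization and the hook-length asymptotic $1/s_\lambda(1^N)=H_\lambda N^{-k}+O(N^{-k-1})=\tfrac{k!}{f^\lambda}N^{-k}+O(N^{-k-1})$, the leading $N^{-k}$ coefficient of $\Wg(N,\tau)$ becomes
\[
\frac{1}{(k!)^2}\sum_{\lambda}(f^\lambda)^2\cdot\frac{k!}{f^\lambda}\,\chi^\lambda(\tau)=\frac{1}{k!}\sum_{\lambda}f^\lambda\chi^\lambda(\tau)=\mathds{1}_{\tau=e},
\]
which is the claimed estimate. With your formulas as written the leading coefficient would come out as $k!$ rather than $1$. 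This is a bookkeeping slip, not a conceptual gap; the architecture of your argument is the standard and correct one.
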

In particular for any $i(l),j(l),i'(l),j'(l)$
\begin{align*}\label{estimation O}
    \left \lvert \E \left[  \prod_{l=1}^k v_{i(l)j(l)}\prod_{l=1}^k \overline{v}_{i'(l)j'(l)}  \right]\right \lvert&\leq \sum_{\pi,\sigma \in \mathfrak{S}_k}\left \lv \mathrm{Wg}(N,\pi \sigma^{-1})\right \lv \\
     &\leq \max_{\pi, \sigma \in \mathfrak{S}_k}\left \lv \mathrm{Wg}(N,\pi \sigma^{-1})\right \lv(k!)^2  \\
     &= O(N^{-k})
\end{align*}
independant of $i,j,i',j'$.

\begin{proof}[Proof Lemma 15]
    For $n$ an integer we denote 
    \begin{align*}
        S_n:&=\sqrt{\frac{N}{2}}e_1^{*}U^ne_1=\sqrt{\frac{N}{2}}\sum_{i=1}^N \lvert v_{1i}\lvert ^2 Z_i^n
    \end{align*}
    where the $\{Z_i\}$ are independent and identically distributed random variables on the unit circle. Therefore 
    \begin{equation*}
        \overline{S_n}=\sqrt{\frac{N}{2}}e_1^{*}(U^*)^ne_1.
    \end{equation*}
    Let $n_1<\ldots<n_l \ ; \ n'_1<\ldots<n'_r \ ; \ p_1,\ldots,p_l \ ; \ q_1, \ldots , q_r$ all integers. We must show that
    \begin{equation*}
        \underset{N \to \infty}{\lim}\E[S_{n_1}^{p_1}\ldots S_{n_l}^{p_l}\overline{S_{n'_1}^{q_1}}\ldots\overline{S_{n'_r}^{q_r}} ] =\mathds{1}_{l=r}\left( \prod_{k=1}^l \mathds{1}_{p_k=q_k}\mathds{1}_{n_k=n'_k}  \right) p_1!\ldots p_l!.
    \end{equation*}
    We denote by $p:=p_1+\ldots+p_l$ and by $q:=q_1+\ldots +q_r$.
    Then 
    \begin{align*}
        \E&[S_{n_1}^{p_1}\ldots S_{n_l}^{p_l}\overline{S_{n'_1}^{q_1}}\ldots\overline{S_{n'_r}^{q_r}} ]\\
        &=\left(\frac{N}{2}\right)^{\frac{p+q}{2}}\sum_{\substack{i_1 \ : \ [p_1]\rightarrow [N] \\ \vdots \\i_l \ : \ [p_l] \rightarrow [N] \\ j_1 \ : \ [q_1] \rightarrow [N] \\ \vdots \\ j_r \ : \ [q_r] \rightarrow [N]}} \E [\lvert v_{1i_1(1)}\lvert^2 \ldots \lvert v_{1i_l(p_l)}\lvert^2 \lvert v_{1j_1(1)}\lvert^2 \ldots \lvert v_{1j_r(q_r)}\lvert^2]\E [Z_{i_1(1)}^{n_1}\ldots Z_{i_l(p_l)}^{n_l}\overline{Z_{j_1(1)}^{n'_1}} \ldots \overline{Z_{j_r(q_r)}^{n'_r}}]
    \end{align*}
 
    For notational convenience, let us denote $\sup_{k\geq1}\lvert \E[Z_1^k]\lvert = b_N$ and recall that $b_N=O(N^{-b})$ for any integer $b$. Note that if $n_1p_1+\ldots +n_lp_l \neq n'_1q_1+\ldots n'_rq_r$ then
    \begin{equation*}
        \left \lvert \E[Z_{i_1(1)}^{n_1}\ldots Z_{i_1(p_1)}^{n_1}\ldots Z_{i_l(1)}^{n_l}\ldots Z_{i_l(p_l)}^{n_l}\overline{Z_{j_1(1)}^{n'_1}}\ldots \overline{Z_{j_1(q_1)}^{n'_1}}\ldots \overline{Z_{j_r(1)}^{n'_r}}\ldots \overline{Z_{j_r(q_r)}^{n'_r}}]\right \lvert \leq b_N.
    \end{equation*}
    Therefore, in this case, using Lemma \ref{weingarten}, we have 
    \begin{equation*}
        \E[S_{n_1}^{p_1}\ldots S_{n_l}^{p_l}\overline{S_{n'_1}^{q_1}}\ldots\overline{S_{n'_r}^{q_r}} ] = O(N^{\frac{p+q}{2}}b_N) \xrightarrow[N \to \infty]{}0.
    \end{equation*}
    Assume now that $n_1p_1+\ldots +n_lp_l = n'_1q_1+\ldots n'_rq_r :=L$. It will be convenient to rewrite 
    \begin{equation*}
        Z_{i_1(1)}^{n_1}\ldots Z_{i_1(p_1)}^{n_1}\ldots Z_{i_l(1)}^{n_l}\ldots Z_{i_l(p_l)}^{n_l}\overline{Z_{j_1(1)}^{n'_1}}\ldots \overline{Z_{j_1(q_1)}^{n'_1}}\ldots \overline{Z_{j_r(1)}^{n'_r}}\ldots \overline{Z_{j_r(q_r)}^{n'_r}} = Z_{i(1)}\ldots Z_{i(L)}\overline{Z_{j(1)}}\ldots \overline{Z_{j(L)}}
    \end{equation*}
    where 
    \begin{align*}
        i(1)&=\ldots=i(n_1)=i_1(1)\\
        i(n_1 +1)&=\ldots=i(2n_1)=i_1(2)\\
        & \ \   \  \ \  \  \vdots \\
         i(n_1p_1+\ldots+n_{l-1}p_{l-1} +1)&=\ldots=i(n_1p_1+\ldots+n_{l-1}p_{l-1} +n_l)=i_l(1)\\
        & \                \ \ \   \ \  \vdots \\
        i(n_1p_1+\ldots + n_l(p_l -1)+1)&=\ldots=i(L)=i_l(p_l)
    \end{align*}
    and 
    \begin{align*}
        j(1)&=\ldots=j(n'_1)=j_1(1)\\
        j(n'_1 +1)&=\ldots=i(2n'_1)=j_1(2)\\
        & \ \ \ \ \               \     \vdots \\
         j(n'_1q_1+\ldots+n'_{r-1}q_{r-1} +1)&=\ldots=j(n'_1q_1+\ldots+n'_{r-1}q_{r-1} +n'_r)=j_r(1)\\
        & \ \ \ \ \               \    \vdots \\
        j(n'_1q_1+\ldots + n'_r(q_r -1)+1)&=\ldots=j(L)=j_r(q_r)
    \end{align*}
    \\ 
    By construction $\#i \leq p$ and $\#j \leq q$. Knowledge of $i,j$ is equivalent to the knowledge of $i_1,\ldots,i_l,j_1,\ldots,j_r$ and we will freely alternate between the two notations.
    \\
    \\
    We will denote by $S$ the set of $L$-tuples $i $ of numbers in $[N]$ that could be a rewriting of some $i_1,\ldots, i_l$, that is, that verifies 
    \begin{align*}
        i(1)&=\ldots=i(n_1)\\
        i(n_1 +1)&=\ldots=i(2n_1)\\
        &\ \  \ \ \ \   \vdots \\
        i(n_1p_1+\ldots+n_{l-1}p_{l-1} +1)&=\ldots=i(n_1p_1+\ldots+n_{l-1}p_{l-1} +n_l)\\
        & \ \ \ \ \ \  \vdots \\
        i(n_1p_1+\ldots + n_l(p_l -1)+1)&=\ldots=i(L)
    \end{align*}
    and respectively $S'$ for $j$ and $j_1,\ldots,j_r.$ We can write
    \begin{equation}\label{Esp 2}
        \E[S_{n_1}^{p_1}\ldots S_{n_l}^{p_l}\overline{S_{n'_1}^{q_1}}\ldots\overline{S_{n'_r}^{q_r}} ]=\left(\frac{N}{2}\right)^{\frac{p+q}{2}}\sum_{(i,j)\in S\times S'}\Psi_{ij}\Theta_{ij}
    \end{equation}
    where 
    \begin{align}\label{psi}
        \Psi_{ij}&=\E[\lvert v_{1i_1(1)}\lvert ^2 \ldots \lvert v_{1i_l(p_l)}\lvert^2 \lvert v_{1j_1(1)}\lvert ^2 \ldots \lvert v_{1j_r(q_r)}\lvert^2] \\
        &=O(N^{-(p+q)})   
    \end{align}
    thanks to Lemma \ref{weingarten}, and 
    \begin{equation*}
        \Theta_{ij}=\E[Z_{i(1)}\ldots Z_{i(L)}\overline{Z_{j(1)}}\ldots \overline{Z_{j(L)}}].
    \end{equation*}
    \\
    Notice that, for $i: [L]\rightarrow [N]$ in $S$ fixed, $\Theta_{ij}=O(b_N)$ unless there exists $\sigma \in \mathfrak{S}_L$ such that $j\circ \sigma =i$ and in this case $\Theta_{ij}=1.$
    \\
    Therefore if $i\in S$ is fixed then
    \begin{align*}
        \sum_{j \in S'}\Psi_{ij}\Theta_{ij}&=O(N^{-(p+q)}) \#\big(\{j:[L]\rightarrow[N], \exists \sigma \in \mathfrak{S}_L, j\circ \sigma =i\}\cap S' \big)\\
        &+ O(N^{-(p+q)})\#\big(\{j:[L]\rightarrow[N], \exists \sigma \in \mathfrak{S}_L, j\circ \sigma =i\}^c\cap S' \big)O(b_N)
    \end{align*}
    A moment of reflexion yields that
    \begin{align*}
        \#\{j:[L]\rightarrow[N], \exists \sigma \in \mathfrak{S}_L, j\circ \sigma =i \}&=\frac{L!}{(\#i^{-1}(1))!\ldots(\#i^{-1}(N))!}\\
        &=O(1).
    \end{align*}
    Indeed to construct such a $j$ we have to choose $\# i^{-1}(1)$ elements in $[L]$ such that their image under $j$ will be $1$ and we have to choose $\# i^{-1}(2)$ elements in $[L] \backslash i^{-1}(\{1\}) $   so on for $3,4$ until $N$. Therefore
    \begin{align*}
         \#\{j:[L]\rightarrow[N], \exists \sigma \in \mathfrak{S}_L, j\circ \sigma =i \}&= \binom{L}{\#i^{-1}(1)}\ldots \binom{L-(\#i^{-1}(1)+\ldots + \#i^{-1}(N-1))}{\#i^{-1}(N)}\\
         &=\frac{L!}{(\#i^{-1}(1))!\ldots(\#i^{-1}(N))!}.
    \end{align*}
    Also $\# S'=N^{p+q}$ and recall that $b_N$ decays faster than any polynomial.
    This implies that 
    \begin{equation*}
        \sum_{j \in S'}\Psi_{ij}\Theta_{ij}=O(N^{-(p+q)})
    \end{equation*}
    with $O(N^{-(p+q)})$ independent of $i$. Returning to \eqref{Esp 2} we get 
    \begin{align*}
        \E[S_{n_1}^{p_1}\ldots S_{n_l}^{p_l}\overline{S_{n'_1}^{q_1}}\ldots\overline{S_{n'_r}^{q_r}} ]&=\left(\frac{N}{2}\right)^{\frac{p+q}{2}}\sum_{i\in S}O(N^{-(p+q)})\\
        &=\left(\frac{N}{2}\right)^{\frac{p+q}{2}}\sum_{m=1}^{p}\sum_{\substack{i\in S \\ \#i=m}}O(N^{-(p+q)})
    \end{align*}
    Without loss of generality we can assume that $q\geq p.$ If $m<p$ then
    \begin{align*}
        \left(\frac{N}{2}\right)^{\frac{p+q}{2}}\sum_{\substack{i\in S \\ \#i=m}}O(N^{-(p+q)})&=O(N^{\frac{p+q}{2}}N^{-(p+q)}N^m)
        =O(N^{m-\frac{p+q}{2}})
        =O(N^{-1})
    \end{align*}
    since $\frac{p+q}{2} \geq p.$ Therefore
    \begin{equation*}
        \E[S_{n_1}^{p_1}\ldots S_{n_l}^{p_l}\overline{S_{n'_1}^{q_1}}\ldots\overline{S_{n'_r}^{q_r}} ]= \left(\frac{N}{2}\right)^{\frac{p+q}{2}}\sum_{\substack{i\in S \\ \#i=p}}\sum_{j\in S'}\Psi_{ij}\Theta_{ij}  \ \  + \ \  O(N^{-1}).
    \end{equation*}
    Suppose that $q > p.$ Then noting that 
    \begin{equation*}
        \#\{i\in S, \ \#i=p\}=O(N^{p})
    \end{equation*}
    we have
    \begin{align*}
        \left(\frac{N}{2}\right)^{\frac{p+q}{2}}\sum_{\substack{i\in S \\ \#i=p}}\sum_{j\in S'}\Psi_{ij}\Theta_{ij}&= \left(\frac{N}{2}\right)^{\frac{p+q}{2}}\sum_{\substack{i\in S \\ \#i=p}} O(N^{-(p+q)}) \\
        &=O(N^{\frac{p+q}{2}}N^{-(p+q)}N^{p})\\
        &=O(N^{\frac{p-q}{2}})\\
        &=O(N^{-\frac{1}{2}}).
    \end{align*}
    To have a non-zero limit it is then necessary that $p=q$.
    In this case 
    \begin{equation*}
        \E[S_{n_1}^{p_1}\ldots S_{n_l}^{p_l}\overline{S_{n'_1}^{q_1}}\ldots\overline{S_{n'_r}^{q_r}} ]=\left(\frac{N}{2}\right)^{p}\sum_{\substack{i\in S \\ j\in S'\\\#i=p \\
        }}\Psi_{ij}\Theta_{ij} \ \ + \ \ O(N^{-1}).
    \end{equation*}
    Now as we already did for $i$ we will see that the only $j$ that have an asymptotic contribution are those such that $\#j=q=p$. Recall that $\Theta_{ij}=O(b_N)$ unless there exists $\sigma \in \mathfrak{S}_L$ such that $j\circ \sigma =i$. Now if $m<q=p, \ \#i=p$ and $\#j=m$ then there can be no $\sigma$ such that $j\circ \sigma =i$. Therefore
    \begin{align*}
        \left(\frac{N}{2}\right)^{p}\sum_{\substack{i\in S \\ j\in S'\\ \#j=m \\ \#i=p \\
        }}\Psi_{ij}\Theta_{ij}=O(N^sb_N)
    \end{align*}
    where $s$ is an integer that depends on $p$ and $m$.
    Hence
    \begin{equation*}
        \E[S_{n_1}^{p_1}\ldots S_{n_l}^{p_l}\overline{S_{n'_1}^{q_1}}\ldots\overline{S_{n'_r}^{q_r}} ]= \left(\frac{N}{2}\right)^{\frac{p+q}{2}}\sum_{\substack{i\in S \\ \#i=p}}\sum_{\substack{j\in S' \\ \#j=p}}\Psi_{ij}\Theta_{ij}  \ \  + \ \  O(N^{-1}).
    \end{equation*}
    Recall that $i$ was defined as the "concatenation" of $i_1,\ldots,i_l$ and note that 
    \begin{equation*}
        i\in S, \#i=p \Leftrightarrow \forall k\in [l], \#i_k=p_k, \ i_k([p_k])\cap i_s([p_s])=\emptyset \ \  \text{if} \ \ k\neq s.
    \end{equation*}
    Likewise 
    \begin{equation*}
        j\in S', \#j=q \Leftrightarrow \forall k\in [r],\ \#j_k=q_k,\ j_k([q_k])\cap j_s([q_s])=\emptyset \ \  \text{if} \ \ k\neq s.
    \end{equation*}
    Denote 
    \begin{equation*}
        T:=\{(i_1,\ldots,i_l), \  i_k : [p_k]\rightarrow[N], \ \#i_k=p_k,\ i_k([p_k])\cap i_s([p_s])=\emptyset \ \  \text{if} \ \ k\neq s\}
    \end{equation*}
    and 
    \begin{equation*}
        T':=\{(j_1,\ldots,j_r), \  j_k : [q_k]\rightarrow[N], \ \#j_k=q_k,\ j_k([q_k])\cap j_s([q_s])=\emptyset \ \  \text{if} \ \ k\neq s\}.
    \end{equation*}
    Then we have 
    \begin{equation*}
        \E[S_{n_1}^{p_1}\ldots S_{n_l}^{p_l}\overline{S_{n'_1}^{q_1}}\ldots\overline{S_{n'_r}^{q_r}} ]=\left(\frac{N}{2}\right)^{p} \sum_{(i_1,\ldots,i_l)\in T}\sum_{(j_1,\ldots,j_r)\in T'}\Psi_{ij}\Theta_{ij} \ \ + \ \ O(N^{-1}).
    \end{equation*}
    Fix $(i_1,\ldots,i_l)\in T$ and $(j_1,\ldots,j_r)\in T'$. Recall that 
    \begin{equation*}
        \Theta_{ij}=\E[Z_{i_1(1)}^{n_1}\ldots Z_{i_1(p_1)}^{n_1}\ldots Z_{i_l(1)}^{n_l}\ldots Z_{i_l(p_l)}^{n_l}\overline{Z_{j_1(1)}^{n'_1}}\ldots \overline{Z_{j_1(q_1)}^{n'_1}}\ldots \overline{Z_{j_r(1)}^{n'_r}}\ldots \overline{Z_{j_r(q_r)}^{n'_r}}].
    \end{equation*}
    Since the $i_s(t)$ are all different there can be no possible combination between the $Z_{i_s(t)}^{n_s}$ in order to compensate a $\overline{Z_{j_r(t)}^{n'_r}}$ and likewise there can be no possible combination between the  $\overline{Z_{j_r(t)}^{n'_r}}$ to compensate a $Z_{i_s(t)}^{n_s}$  . Therefore $\left \lvert \Theta_{ij}\right \lvert \leq b_N$ unless $l=r, \ p_1=q_1,\ldots,p_l=q_l,n_1=n'_1,\ldots,n_l=n'_l$ and there exist $\sigma_1\in \mathfrak{S}_{p_1},\ldots,\sigma_{l}\in \mathfrak{S}_{p_l}$ such that $j_1 \circ \sigma_1=i_1,\ldots,j_l \circ \sigma_l=i_l$ and if this is the case then $\Theta_{ij}=1$. Thus, in the case where one of the conditions $l=r, \ p_1=q_1,\ldots,p_l=q_l,n_1=n'_1,\ldots,n_l=n'_l$ is not satisfied we have 
    \begin{equation*}
        \E[S_{n_1}^{p_1}\ldots S_{n_l}^{p_l}\overline{S_{n'_1}^{q_1}}\ldots\overline{S_{n'_r}^{q_r}} ]= O(N^sb_N) +O(N^{-1})\xrightarrow[N \to \infty]{}0
    \end{equation*}
    where $s$ is some integer that depend on $p$ and $q$.\\
    Assume now that $l=r, \ n_1=n'_1,\ldots,n_l=n'_l, \ p_1=q_1,\ldots,p_l=q_l.$ 
    For $i=(i_1,\ldots,i_l)\in T$ fixed note 
    \begin{equation*}
        T_i ':=\{(j_1, \ldots,j_l)\in T', \forall k \in [l], \ \exists \sigma_k\in \mathfrak{S}_{p_k},\ j_k \circ \sigma_k=i_k\}.
    \end{equation*}
    If $i=(i_1,\ldots,i_l)\in T$ and $j=(j_1,\ldots,j_l)\in T_{i}'$ then
    \begin{align*}
        \Psi_{ij}&=\E[\lvert v_{1i_1(1)}\lvert ^2 \ldots \lvert v_{1i_l(p_l)}\lvert^2 \lvert v_{1j_1(1)}\lvert ^2 \ldots \lvert v_{1j_l(p_l)}\lvert^2]\\
        &=\E[\lvert v_{1i_1(1)}\lvert^4\ldots \lvert v_{1i_l(p_l)}\lvert^4].
    \end{align*}
    Therefore 
    \begin{align*}
        \E[S_{n_1}^{p_1}\ldots S_{n_l}^{p_l}\overline{S_{n'_1}^{q_1}}\ldots\overline{S_{n'_r}^{q_r}} ]&=\left(\frac{N}{2}\right)^{p} \sum_{(i_1,\ldots,i_l)\in T}\sum_{j\in T_{i}'}\Psi_{ij}\Theta_{ij} \ \ + \ \ O(N^{-1})\\
        &=\left(\frac{N}{2}\right)^{p} \sum_{(i_1,\ldots,i_l)\in T}\E[\lvert v_{1i_1(1)}\lvert^4\ldots \lvert v_{1i_l(p_l)}\lvert^4] \#T_{i}' \ \ + \ \ O(N^{-1})\\
        &=\left(\frac{N}{2}\right)^{p} \sum_{(i_1,\ldots,i_l)\in T}\E[\lvert v_{1i_1(1)}\lvert^4\ldots \lvert v_{1i_l(p_l)}\lvert^4] p_1!\ldots p_l! \ \ + \ \ O(N^{-1}).
    \end{align*}
    For $i=(i_1,\ldots,i_l)\in T$ we have by Lemma \ref{weingarten}
    \begin{align*}
        \E[\lvert v_{1i_1(1)}\lvert^4\ldots \lvert v_{1i_l(p_l)}\lvert^4]&=\E[v_{1i_1(1)}^2\ldots v_{1i_l(p_l)}^2 \overline{v}_{1i_1(1)}^2\ldots \overline{v}_{1i_l(p_l)}^2]\\
        &=\E[v_{1m(1)}\ldots v_{1m(2p)}\overline{v_{1m(1)}\ldots v_{1m(2p)}}]\\
        &=\sum_{\pi,\sigma \in \mathfrak{S}_{2p}}\mathrm{Wg}(N,\pi \sigma^{-1})\prod_{k=1}^{2p}\mathds{1}_{m(k)=m(\sigma(k))}
    \end{align*}
    where 
    \begin{equation}\label{m}
        m(1)=m(2)=i_1(1),\  m(3)=m(4)=i_1(2),\ \ldots,\ m(2p-1)=m(2p)=i_l(p_l).
    \end{equation}
    Since all the $i_s(t)$ are different the image set of $m$ is of cardinal $p$ and each element in this set has exactly two preimages.
    
Using  Lemma \ref{weingarten} we have 
\begin{align*}
    \sum_{\pi,\sigma \in \mathfrak{S}_{2p} }\mathrm{Wg}(N,\pi \sigma^{-1})\prod_{k=1}^{2p}\mathds{1}_{m(k)=m(\sigma(k))}&=\sum_{\sigma \in \mathfrak{S}_{2p}}(N^{-2p}+ O(N^{-2p-1}))\prod_{k=1}^{2p}\mathds{1}_{m(k)=m(\sigma(k))}\\
    &+\sum_{\substack{\sigma,\pi \in \mathfrak{S}_{2p}\\
    \pi \neq \sigma}}O(N^{-2p-1})\prod_{k=1}^{2p}\mathds{1}_{m(k)=m(\sigma(k))}\\
    &=\sum_{\sigma \in \mathfrak{S}_{2p}}N^{-2p}\prod_{k=1}^{2p}\mathds{1}_{m(k)=m(\sigma(k))} \ \ + \ \ O(N^{-2p-1})\\
    &=N^{-2p}\# \{\sigma \in \mathfrak{S}_{2p}, \ m=m\circ \sigma \} \ \ + \ \ O(N^{-2p-1}).
\end{align*}
By construction of $m$ in \eqref{m}
\begin{equation*}
    \# \{\sigma \in \mathfrak{S}_{2p}, \ m=m\circ \sigma \}=2^p.
\end{equation*}
Therefore for each $i \in T$ 
\begin{equation*}
    \E[\lvert v_{1i_1(1)}\lvert^4\ldots \lvert v_{1i_l(p_l)}\lvert^4]=N^{-2p}2^p \ \ + \ \ O(N^{-2p-1})
\end{equation*}
with the $O(N^{-2p-1})$ independent of $i$. 
Notice that $\#T=N(N-1)\ldots (N-p+1).$
Consequently
\begin{align*}
    \E[S_{n_1}^{p_1}\ldots S_{n_l}^{p_l}\overline{S_{n'_1}^{q_1}}\ldots\overline{S_{n'_r}^{q_r}} ]&=\left(\frac{N}{2}\right)^p\sum_{i\in T}(N^{-2p}2^p+O(N^{-2p-1}))p_1!\ldots p_l! \ \ + \ \ O(N^{-1})\\
    &=p_1!\ldots p_l!N^p N^{-2p}N(N-1)\ldots (N-p+1) + O(N^{-1})\\
    &\sim p_1!\ldots p_l!
\end{align*}
which concludes the proof.
\end{proof}
\bigskip
\section{Optimality Of The Critical Time}\label{Section 4}
In \cite{DubRek} the authors considered the model 
\begin{equation}\label{DubRek}
    G(t):=UA(t), \ t\in [-1,1]
\end{equation}
with $A(t)=I_N-(1-t)vv^*$ and $U$ a random unitary matrix satisfying some assumptions (which are in particular satisfied by Haar matrices and the model of Theorem \ref{extension bis}).\ $G(t)$ is a unitary matrix only for $t=\pm1$. As for $G(0)$, it is a matrix whose $N-1$ lower right minor is a truncated Haar matrix and with an added zero eigenvalue. They observed that most eigenvalues trajectories describe small loops near the boundary of the unit disk. However one trajectory crosses the disk and passes through zero when $t=0$, this is the one we call the outlier (see Figure 1 below).

They showed that the timescale $t=O(N^{-\frac{1}{2}})$ is the optimal timescale for the emergence of the outlier. To formulate more precise results, we begin by recalling some key definitions. \\
\\

We say that a sequence of event $\Omega^{(N)}$ occurs with high probability (w.h.p) if
\begin{equation*}
    \PP(\Omega^{(N)})\xrightarrow[N \to \infty]{}1.
\end{equation*}
We say that a sequence of matrix $G_N(t)$ has a strongly separated outlier towards the origin for $t \in T \subset[-1,1]$ if there exists $\alpha_1 < \alpha_2,\ d_1,d_2\in (0,1)$ such that w.h.p, for every $t\in T$, 
\begin{itemize}
    \item $D(0,d_1\frac{N^{\alpha_1}}{1+N^{\alpha_1}})$ contains exactly one eigenvalue of $G_N(t)$;
    \item $\D \backslash D(0,d_2 \frac{N^{\alpha_2}}{1+N^{\alpha_2}}) $ contains $N-1$ eigenvalues of $G_N(t).$
\end{itemize}
\begin{figure}
    \centering
    \includegraphics[width=0.3\linewidth]{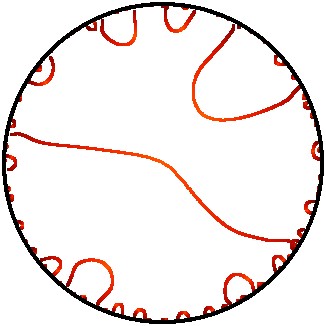}
    \caption{Trajectories of the eigenvalues of the $UA$ model of size 100 × 100. The evolution of the time parameter $t$ is represented by shades of red.}
    \label{fig:enter-label}
\end{figure}
We can now state the following result of Dubach and Reker in \cite{DubRek} (Theorem 1 and Theorem 15). 
\begin{theorem}\label{Dubach Reker}
    Let $G(t)=UA(t)$ as in \eqref{DubRek} with $U$ be an Haar distributed matrix. For any $\alpha > \eps >0$ the following holds :
    \begin{enumerate}[label=\roman*)]
        \item There is a strongly separated outlier towards the origin for $\lvert t \lvert <N^{-\frac{1}{2}-\alpha}.$
        \item With high probability, for all $\lvert t \lvert > N^{-\frac{1}{2}+\alpha},$ all eigenvalues are in $\D \backslash D(0,1-N^{-\alpha + \eps}).$
    \end{enumerate}
    Moreover, there is no strongly separated outlier at time $t=O(N^{-\frac{1}{2}}).$
\end{theorem}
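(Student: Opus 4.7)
Parts (i) and (ii) are direct spectral perturbation arguments which I would adopt from \cite{DubRek}. For (i) one uses Lemma \ref{zeros eigenvalues} to observe that when $|t|<N^{-1/2-\alpha}$ the constant term $t\sqrt{N}$ of $f_N$ has modulus $O(N^{-\alpha})$, producing a single root of $f_N$ of comparable size while the other roots stay near the unit circle; for (ii) one notes that when $|t|>N^{-1/2+\alpha}$ the matrix $A(t)-I_N$ has operator norm of order $N^{-1/2+\alpha}$, and Weyl-type singular value inequalities concentrate the full spectrum within $N^{-\alpha+\eps}$ of the unit circle.

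The novel part is the optimality claim, and this is where the analytic function approach genuinely helps. Take a sequence $\{t_N\}$ with $a_N:=t_N\sqrt{N}\to c$ for some $c\in\C^{*}$. Then $G(t_N)=UA(t_N)$ is exactly the model of Theorem \ref{Extension result} with parameter $a_N$; an inspection of the argument shows it is uniform for $a_N$ in a compact subset of $\C^{*}$: the coefficient convergence in Lemma \ref{convergence of the coefficient} extends by Slutsky applied to $P_0^{(N)}=a_N\to c$, and the second-moment tightness bound in Lemma \ref{Tightness function} is independent of $a_N$. Hence the empirical eigenvalue measure of $G(t_N)$ converges vaguely to the zero point process of $\phi_c$.

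Suppose now that some choice of $\alpha_1<\alpha_2$ and $d_1,d_2\in(0,1)$ witnesses a strongly separated outlier, and set $r_i^{(N)}:=d_iN^{\alpha_i}/(1+N^{\alpha_i})$. I focus on the requirement that $D(0,r_1^{(N)})$ contain exactly one eigenvalue with high probability. If $\alpha_1<0$ then $r_1^{(N)}\to 0$; for any $\eps>0$ the event $\#\{\Sp(G(t_N))\cap D(0,r_1^{(N)})\}\geq 1$ is eventually contained in $\#\{\Sp(G(t_N))\cap D(0,\eps)\}\geq 1$, and by vague convergence
\begin{equation*}
\PP\big(\#\{\Sp(G(t_N))\cap D(0,\eps)\}\geq 1\big)\xrightarrow[N\to\infty]{}\PP\big(\mathcal{Z}_{\phi_c}\cap\overline{D(0,\eps)}\neq\emptyset\big).
\end{equation*}
The right-hand side tends to $0$ as $\eps\to 0$ since $\phi_c(0)=c\neq 0$ almost surely, contradicting the ``exactly one'' requirement. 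If $\alpha_1\geq 0$ then $r_1^{(N)}$ has a positive limit $r\in(0,d_1]\subset(0,1)$, and vague convergence forces $\PP(N_r=1)=1$ where $N_r:=\#\{\mathcal{Z}_{\phi_c}\cap\overline{D(0,r)}\}$.

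The main obstacle is ruling out this last degenerate scenario for the specific $r$ coming from the adversary. This follows from the non-degeneracy of the zero distribution of the hyperbolic GAF $\sum_{k\geq 1}c_kz^k$: the Edelman--Kostlan formula (insensitive to the shift by $c$) gives $\E[N_r]=r^2/(1-r^2)$, which is incompatible with $\PP(N_r=1)=1$ except possibly at $r=1/\sqrt{2}$; at that borderline the explicit distribution of zero counts for the hyperbolic GAF (which in the Hough--Krishnapur--Peres--Virág framework decomposes as a sum of independent Bernoulli variables) gives $\PP(N_r=0)>0$, closing the last case. In every scenario we contradict the presumed outlier, proving the final assertion.
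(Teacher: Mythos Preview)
Your reduction of the optimality claim to the limiting function $\phi_c$, and the assertion that $\phi_c$ cannot almost surely have exactly one zero in a fixed disk, is exactly the paper's strategy (carried out there for $M=VDV^{*}$, with the Haar case declared identical up to notation). The execution of the final step, however, fails. The Edelman--Kostlan intensity is \emph{not} insensitive to the deterministic shift by $c$: $\phi_c$ is a Gaussian analytic function with nonzero mean, and its zero intensity genuinely depends on $c$ --- for instance $\E[N_r(\phi_c)]\to 0$ as $|c|\to\infty$, whereas $\phi_0(z)=-z\sum_{k\ge 0}c_{k+1}z^k$ carries a deterministic zero at the origin, giving $\E[N_r(\phi_0)]=1+r^2/(1-r^2)$. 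The Peres--Vir\'ag Bernoulli decomposition is likewise a feature of the hyperbolic GAF $\sum_{k\ge 0}c_kz^k$ with a \emph{random} Gaussian constant term and does not transfer to $\phi_c$, so your treatment of the borderline $r=1/\sqrt{2}$ has no basis. As written you have not ruled out $\PP(N_r=1)=1$. (Separately, your one-line sketch of (ii) is off: $\|A(t)-I_N\|=|1-t|$ is of order one, not $N^{-1/2+\alpha}$; the actual argument in \cite{DubRek} is different.)

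The paper replaces this step by an elementary device (Lemma~\ref{Fyo estimate}): for sufficiently small $q$, Rouch\'e comparisons on the explicit positive-probability event $\bigcap_{k\ge 2}\{|c_k|\le k\}$, intersected with a constraint on $c_1$ (respectively on $c_1,c_2$), yield both $\PP(\#(\mathcal{Z}_{\phi_c}\cap D(0,q))=0)>0$ and $\PP(\#(\mathcal{Z}_{\phi_c}\cap D(0,q))\ge 2)>0$. The first forces $\alpha_1\ge 0$ (as in your case split), and the second, with $q<d_2$, forces $\alpha_2\le 0$, contradicting $\alpha_1<\alpha_2$. If you wish to stay within your own framework, the quickest repair is to observe that $\PP\bigl(\sup_{|z|\le r}\bigl|\sum_{k\ge 1} c_kz^k\bigr|<|c|\bigr)>0$ for every $r<1$, which gives $\PP(N_r=0)>0$ directly and makes any appeal to exact intensities or Bernoulli structure unnecessary.
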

In fact they showed that $i)$ and $ii)$ hold for more general matrix $U$, in particular it is also true for the model $M=VDV^{*}$ of Theorem \ref{extension bis}. However, they only proved the optimality of the critical timescale for $U$ being a Haar matrix. Indeed their proof relies crucially upon the integrability of the $UA$ model in the Haar case, which prevents it from being generalized directly. We generalize their result by adopting a different perspective. The key idea is that, at the critical timescale, the eigenvalues emerge as the zeros of a random analytic function converging to a Gaussian one which allows us to rule out the presence of an outlier. We will provide the proof for the model $M=VDV^*$, up to minor changes in notation the same argument applies for a Haar matrix $U$.\\

We now briefly outline the proof strategy.\ We aim to show that, at the critical timescale, several eigenvalues remain at an order-one distance from both the origin and the unit circle, and that there exists an order-one disk free of eigenvalues.\ Together, these observations imply that the outlier is not strongly separated at this timescale. Since the eigenvalues in this regime are the zeros of $g _N$, which converges to $\phi_{a'}$, the probability of having no eigenvalues in a disk will be approximately the same as the probability of $\phi_{a'}$ having no zeros in that disk.\ Thus, we need to estimate the probability that $\phi_{a'}$ has either no zero or multiple zeros in a disk of order one. In fact, for the proof of the optimality of the critical time scale we only need that those probabilities are strictly positive. This is the content of the next result. Recall that if $Z$ is a standard complex Gaussian random variable then $\lvert Z \lvert ^2$ follows an exponential law of parameter 1. 
\begin{lemma}\label{Fyo estimate}
  Let $q\in (0,1) $ such that $\lvert a'\lvert-\frac{q^2(2-q)}{(1-q)^2}>0$. We have that :
  \begin{enumerate}[label=\roman*)]
      \item $\PP\left(\#(D(0,q)\cap \phi_{a'}^{-1}(0))=0\right)>0$,
      \item $\PP\left(\#(D(0,q)\cap \phi_{a'}^{-1}(0))\geq 2\right)>0$.
  \end{enumerate}
\end{lemma}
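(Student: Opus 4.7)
The plan is to establish each statement by constructing an explicit event of positive probability on which $\phi_{a'}$ has the prescribed number of zeros in $D(0,q)$, using Rouché's theorem together with simple estimates on the Gaussian coefficients. The key algebraic identity
\[
\sum_{k \geq 2} k q^k \;=\; \frac{q}{(1-q)^2} - q \;=\; \frac{q^2(2-q)}{(1-q)^2}
\]
rewrites the hypothesis as $\lvert a' \rvert > \sum_{k \geq 2} k q^k$, in a form well suited to direct sup-norm bounds on $\overline{D(0,q)}$.

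For $(i)$, I would pick $\delta > 0$ with $\delta q + \sum_{k \geq 2} k q^k < \lvert a' \rvert$, which is possible by hypothesis, and consider the event
\[
E_1 := \{\lvert c_1 \rvert \leq \delta\} \cap \bigcap_{k \geq 2} \{\lvert c_k \rvert \leq k\}.
\]
Since $\lvert c_k \rvert^2$ is exponentially distributed with parameter one, $\PP(\lvert c_k \rvert > k) = e^{-k^2}$ is summable, so by independence $\PP(E_1) > 0$. On $E_1$ and for $\lvert z \rvert \leq q$, the triangle inequality yields $\lvert \phi_{a'}(z) - a' \rvert \leq \delta q + \sum_{k\geq 2} k q^k < \lvert a' \rvert$, and hence $\phi_{a'}$ is zero-free on $\overline{D(0,q)}$.

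For $(ii)$, I would take the polynomial $p(z) := a' - (4 a'/q^2)\, z^2$, whose two simple zeros lie at $\pm q/2 \in D(0,q)$, and set $\mu := \min_{\lvert z \rvert = q}\lvert p(z) \rvert > 0$. For a parameter $\lambda > 0$, consider
\[
E_2 := \{\lvert c_1 \rvert < \lambda\} \cap \bigl\{\lvert c_2 - 4 a'/q^2 \rvert < \lambda\bigr\} \cap \bigcap_{k \geq 3}\{\lvert c_k \rvert < \lambda k\}.
\]
The summability of $\sum_{k\geq 3} e^{-\lambda^2 k^2}$ yields $\PP(E_2) > 0$. On $E_2$ and $\lvert z \rvert = q$,
\[
\lvert \phi_{a'}(z) - p(z) \rvert \;\leq\; \lambda\Bigl(q + q^2 + \sum_{k \geq 3} k q^k\Bigr),
\]
which is smaller than $\mu$ once $\lambda$ is chosen sufficiently small. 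Rouché then gives two zeros of $\phi_{a'}$ (counted with multiplicity) in $D(0,q)$, and Hurwitz's theorem, by shrinking $\lambda$ further if necessary, forces these two zeros to be distinct.

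The main subtle step will be ensuring distinct zeros in $(ii)$: Rouché alone returns the count with multiplicity, whereas $\#(D(0,q) \cap \phi_{a'}^{-1}(0))$ is set-theoretic. This is handled by choosing $\lambda$ small enough that Hurwitz separates the two zeros, which works because $p$ has two simple zeros strictly inside $D(0,q)$.
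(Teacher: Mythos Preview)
Your proof is correct and follows the same overall strategy as the paper: exhibit an explicit event of positive probability (via independence and the exponential tails of $|c_k|^2$) on which Rouch\'e's theorem controls the zero count. For part~(i) your argument is in fact slightly leaner than the paper's, which compares $\phi_{a'}$ with the linear function $a'-c_1z$ via Rouch\'e; you instead bound $|\phi_{a'}(z)-a'|<|a'|$ directly, which is the same inequality with one less step.

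The genuine difference is in part~(ii). The paper works on the event where $|c_2|$ is \emph{large} (and $|c_1|$ bounded), applies Rouch\'e once to pass from $\phi_{a'}$ to the quadratic $h(z)=a'-c_1z-c_2z^2$, and then a second time to pass from $h$ to $-c_2z^2$, obtaining two zeros counted with multiplicity. You instead fix a deterministic target polynomial $p(z)=a'-(4a'/q^2)z^2$ with two simple zeros at $\pm q/2$, and work on the event where the random coefficients are uniformly close to those of $p$. Your route has the advantage that, by localising around each simple zero of $p$ separately (your Hurwitz step), you obtain two \emph{distinct} zeros, which is what the set-theoretic notation $\#(D(0,q)\cap\phi_{a'}^{-1}(0))$ literally asks for; the paper's proof yields two zeros only with multiplicity and does not address distinctness. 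In the paper's application this does not matter, since the relevant counts are of eigenvalues in a point process (hence with multiplicity), but your reading of the statement is the more careful one.
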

\begin{proof}
  Note $r:=\frac{\lvert a'\lvert}{q}-\frac{q(2-q)}{(1-q)^2}=\frac{1}{q}\left(\lvert a'\lvert-\frac{q^2(2-q)}{(1-q)^2})\right)>0.$ Observe that $r < \frac{\lvert a' \lvert}{q}$. Let us consider the event 
  \begin{equation*}
      E:=\bigcap_{k\geq 2}\{\lvert c_k\lvert \leq k\}\cap \{\lvert c_1\lvert<r\}.
  \end{equation*}
  Note $g(z):=a'-c_1z$, $z\in \D$. On the event $E$ we have that for all $\lvert z \lvert =q$
  \begin{align*}
      \lvert \phi_{a'}(z)-g(z)\lvert &=\left \lvert -\sum_{k\geq 2}c_kz^z\right \lvert \\
      &\leq \sum_{k\geq2}kq^k\\
      &=\frac{q^2(2-q)}{(1-q)^2}.
  \end{align*}
  On $E$ we also have that for $\lvert z \lvert =q$
  \begin{align*}
      \lvert g(z) \lvert &\geq \lvert a' \lvert -\lvert c_1\lvert q
      > \lvert a' \lvert -qr
      =\frac{q^2(2-q)}{(1-q)^2}.
  \end{align*}
  Therefore, by Rouché's Theorem, on the event $E$, $\phi_{a'}$ and $g$ have the same number of zeros inside $D(0,q)$. Observe that $g(z)=0$ if and only if $c_1\neq0, \ z=\frac{a'}{c_1}$, but on the event $E$ the following inequality holds : $\lvert c_1 \lvert <r<\frac{\lvert a' \lvert}{q}$  which yields that $q<\left \lvert \frac{a'}{c_1}\right \lvert$. In conclusion, on the event $E$, $g$ does not have any zero inside $D(0,q)$ and so does $\phi_{a'}$. It only remains to prove that $\PP(E)>0$. By using the fact that the $\{c_k\}$ are independent and that $\lvert c_k \lvert ^2$ follows an exponential law of parameter $1$ we see that 
  \begin{equation*}
      \PP(E)=(1-\re^{-r^2})\prod_{k\geq 2}(1-\re^{-k^2}).
  \end{equation*}
  It is then direct that $\PP(E)>0$ and this concludes the proof of the first statement.\\
\\
  For the second statement the proof follows the same pattern. Note $h(z):=a'-c_1z-c_2z^2, \ z\in \D.$ Fix $s$ such that $s > \lv a'\lv>\frac{q^2(2-q)}{(1-q)^2}.$ Let us consider the event 
  \begin{equation*}
      B:= \bigcap_{k\geq3}\{\lvert c_k \lvert \leq k\}\cap \{\lvert c_2\lvert q^2>2s\}\cap \{s>\lvert a' \lvert + \lvert c_1 \lvert q\}.
  \end{equation*}
  On the event $B$, for all $\lvert z \lvert =q$ we have that
  \begin{align*}
      \lvert \phi_{a'}(z) -h(z)\lvert &\leq \sum_{k\geq 3}kq^k 
      \leq \sum_{k\geq2}kq^k 
      \leq \frac{q^2(2-q)}{(1-q)^2}.     
  \end{align*}
  On the event $B$ we also have that for all $\lvert z \lvert =q$
  \begin{align*}
      \lvert h(z) \lvert &\geq \lvert c_2 \lvert q^2 - \lvert a'\lvert - \lvert c_1 \lvert q 
      >2s -s
      >\frac{q^2(2-q)}{(1-q)^2}.
  \end{align*}
  Therefore by Rouché's Theorem, on the event $B$, $\phi_{a'}$ and $h$ have the same number of zeros inside of $D(0,q)$. We now show that, on $B$, $h$ has $2$ zeros in $D(0,q)$ and to do this we once again use Rouché's Theorem. On $B$ we have that for all $\lvert z \lvert =q$ 
  \begin{align*}
      \lvert h(z) - (-c_2z^2)\lvert &=\lvert a'-c_1z \lvert 
      \leq \lvert a'\lvert + \lvert c_1\lvert q 
      <s
      <\lvert -c_2z^2\lvert.
  \end{align*}
  Therefore by Rouché's Theorem, $h$ and $z\mapsto-c_2z^2$ have the same number of zeros inside $D(0,q)$, counting multiplicity, which is two. It only remains to show that $\PP(B)>0.$ By using the independence of the $\{c_k\}$ and the fact that $\lvert c_k \lvert ^2$ follows an exponential law of parameter $1$ we get that 
  
  \begin{equation*}
      \PP(B)=\re^{-\frac{s^2}{q^4}}\left(1-\re^{-\left ( \frac{s-\lvert a' \lvert}{q}\right)^2}\right)\prod_{k\geq 3}(1-\re^{-k^2}).
  \end{equation*}
  It is then direct that $\PP(B)>0$ which concludes the proof.
\end{proof}
We now state and prove our result answering a question raised in \cite{DubRek}.
\begin{theorem}
    Let $M=VDV^{*}$ as defined in Theorem \ref{extension bis} and let $G(t)=MA(t)$ as defined in \eqref{DubRek}. There is no strongly separated outlier at time $t=aN^{-\frac{1}{2}}.$
\end{theorem}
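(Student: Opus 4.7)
The plan is to derive a contradiction from the existence of a strongly separated outlier at time $t=aN^{-1/2}$. First I observe that $G(aN^{-1/2})=MA$ with $A=I_N-(1-aN^{-1/2})vv^*$, so it is exactly the matrix of Theorem \ref{extension bis}; hence by Theorem \ref{iid model result}, the empirical measure of its eigenvalues converges vaguely to the zero process of $\phi_{a'}$. I will then feed this into Lemma \ref{Fyo estimate}, which supplies positive lower bounds for the probability that $\phi_{a'}$ has either no zeros or at least two zeros in a well-chosen centered disk.

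The bridge between these two ingredients is the following standard portmanteau-type fact: for any fixed $q\in(0,1)$, almost surely no zero of $\phi_{a'}$ lies on $\partial D(0,q)$, because the zeros of a nondegenerate Gaussian analytic function have a Lebesgue-absolutely continuous first intensity, so the expected number of zeros on any fixed smooth curve vanishes. Combined with the vague convergence of the point processes, this upgrades to convergence in distribution of the counts
\begin{equation*}
\#\bigl(\Sp(G(aN^{-1/2}))\cap D(0,q)\bigr)\xrightarrow[N\to\infty]{d}\#\bigl(\mathcal{Z}_{\phi_{a'}}\cap D(0,q)\bigr).
\end{equation*}

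Now suppose for contradiction that there exist $\alpha_1<\alpha_2$ and $d_1,d_2\in(0,1)$ realizing the strongly separated outlier condition, and write $\rho_1^{(N)}:=d_1 N^{\alpha_1}/(1+N^{\alpha_1})$. I would split on the sign of $\alpha_1$. If $\alpha_1\geq 0$, then $\rho_1^{(N)}\geq d_1/2$ for every $N\geq 1$; I fix some $q<d_1/2$ small enough that $|a'|-q^2(2-q)/(1-q)^2>0$, which ensures $D(0,q)\subset D(0,\rho_1^{(N)})$, and apply Lemma \ref{Fyo estimate}(ii). Together with the displayed convergence, this yields $\liminf_N \PP\bigl(\#(\Sp(G(aN^{-1/2}))\cap D(0,q))\geq 2\bigr)>0$, incompatible with ``exactly one eigenvalue in $D(0,\rho_1^{(N)})$'' holding with high probability. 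If $\alpha_1<0$ instead, then $\rho_1^{(N)}\to 0$, so $D(0,\rho_1^{(N)})\subset D(0,q)$ for $N$ large and any fixed $q$; choosing $q$ small enough for Lemma \ref{Fyo estimate}(i) to apply, I obtain $\liminf_N \PP\bigl(\#(\Sp(G(aN^{-1/2}))\cap D(0,q))=0\bigr)>0$, which contradicts the fact that $D(0,\rho_1^{(N)})$ must contain at least one eigenvalue w.h.p. Either case produces a contradiction, proving the theorem.

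I expect the main technical point to be the upgrade from vague convergence of random measures to convergence in distribution of the count on the specific disk $D(0,q)$, which rests on the no-boundary-mass property discussed above. It is worth noting that the second condition in the definition of a strongly separated outlier (the one involving $\alpha_2$ and $d_2$) is never actually used: the failure of the first condition alone produces the contradiction for any admissible choice of the remaining parameters.
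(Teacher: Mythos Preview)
Your argument is correct and reaches the same conclusion as the paper, but the route differs in two noteworthy ways.

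\textbf{Technical bridge.} The paper works at the level of the random analytic functions $g_N\to\phi_{a'}$ in $H(\D)$: for the ``no zeros'' direction it uses continuity of $f\mapsto\inf_{|z|\leq q}|f(z)|$ together with Portmanteau, and for the ``at least two zeros'' direction it passes through Skorokhod's representation and Rouch\'e's theorem. You instead work directly with the vague convergence of the zero point processes and invoke the continuity-set criterion (no mass on $\partial D(0,q)$) to obtain convergence in law of the count $\#(\Sp(G_N)\cap D(0,q))$. This handles both directions at once and is arguably cleaner. One small remark: $\phi_{a'}$ is a Gaussian analytic function with nonzero mean, so strictly speaking the absolute continuity of its first intensity comes from the general Kac--Rice/Edelman--Kostlan formula rather than the centered-GAF version; the conclusion is of course unchanged.

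\textbf{Logical organization.} The paper fixes a single $q<d_2$, proves $\alpha_1\geq 0$ from Lemma \ref{Fyo estimate}(i) together with the first separation condition, then proves $\alpha_2\leq 0$ from Lemma \ref{Fyo estimate}(ii) together with the \emph{second} separation condition, and finally contradicts $\alpha_1<\alpha_2$. You instead split on the sign of $\alpha_1$ and, in the case $\alpha_1\geq 0$, compare against the \emph{first} condition again (via $q<d_1/2\leq\rho_1^{(N)}$). As you note, this makes the second separation condition superfluous. That is a genuine simplification the paper does not make; in effect you show that already the first clause of the strongly-separated-outlier definition fails at the critical timescale.
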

\begin{proof}
    The claim is proven by contradiction. Assume there is a strongly separated outlier at time $t= aN^{-\frac{1}{2}}.$ We proved in Theorem \ref{iid model result} that the spectrum of $G_N:=G(aN^{-\frac{1}{2}})$ is exactly the zeros of a sequence of random analytic functions $g_N$ such that $g_N \xrightarrow[N\to \infty]{d}\phi_{a'}.$ Let $q\in (0,1)$ such that $\lvert a'\lvert-\frac{q^2(2-q)}{(1-q)^2}>0$ and $q<d_2$ (the $d_2$ of the definition of strongly separated outlier just before Theorem \ref{Dubach Reker}). The function
    \begin{align*}
        H(\D&)  \rightarrow \R\\
        &f\rightarrow \inf_{\lvert z \lvert \leq q}\lvert f(z) \lvert
    \end{align*}
    is continuous. Therefore
    \begin{equation*}
        \inf_{\lvert z \lvert \leq q} \lvert g_N(z)\lvert \xrightarrow[N \to \infty]{d}\inf_{\lvert z \lvert \leq q}\lvert \phi_{a'}(z) \lvert .
    \end{equation*}
    Now note that 
    \begin{equation*}
        \PP \left(\#\left(\overline{D(0,q)}\cap\Sp(G_N)\right)=0\right)=\PP\left(\inf_{\lvert z \lvert \leq q}\lvert g_N(z)\lvert >0\right).
    \end{equation*}
    Therefore by Portmanteau's Theorem
    \begin{align*}
        \varliminf \PP\left(\inf_{\lvert z \lvert \leq q}\lvert g_N(z)\lvert >0\right)&\geq \PP\left(\inf_{\lvert z \lvert \leq q}\lvert \phi_{a'}(z)\lvert >0\right)\\
        &=\PP\left(\#\left(\overline{D(0,q)}\cap \phi_{a'}^{-1}(0)\right)=0\right).
    \end{align*}
    Lemma \ref{Fyo estimate} then yields that for all sufficiently large $N$
    \begin{equation*}
        \PP \left(\#\left (\overline{D(0,q)}\cap \Sp(G_N)\right )=0\right)\geq C_q > 0.
    \end{equation*}
    Hence, the strong separation must hold with $\alpha_1 \geq 0$ (the $\alpha_1$ of the definition of strongly separated outlier just before Theorem \ref{Dubach Reker}). Similarly we now aim to show that for $N$ large enough the probability of having two eigenvalues in $D(0,q)$ is strictly positive. 
    \\
    By Skorokhod's Theorem we can always assume that almost surely $g_N \xrightarrow[N \to \infty]{} \phi_{a'}$ in the sense of uniform convergence on compact subsets of $\D$. If we assume that $\phi_{a'}$ has at least $2$ zeros inside $D(0,q)$, then it is a direct corollary of Rouché's Theorem that for all $N$ large enough $g_N$ has at least $2$ zeros inside $D(0,q)$. Therefore we have that
    \begin{equation*}
        \{\# D(0,q)\cap \phi_{a'}^{-1}(0)\geq 2 \}\subset\bigcup_{N}\bigcap_{n\geq N}\{\#\left(D(0,q)\cap g_n^{-1}(0)\right) \geq 2\}:=E'.
    \end{equation*}
    This yields that 
    \begin{align*}
        \PP(E')&\geq \PP\left(\# D(0,q)\cap \phi_{a'}^{-1}(0)\geq 2\right)
        > C'_q 
        >0.
    \end{align*}
    Now noting that 
    \begin{equation*}
        \PP(E')=\lim_{N\to \infty}\PP \left(\bigcap_{n\geq N}\{\#\left(D(0,q)\cap g_n^{-1}(0)\right)\geq 2 \} \right )
    \end{equation*}
    we deduce that for all sufficiently large $N$ we have 
    \begin{equation*}
        \PP \left (\# \left (D(0,q)\cap \Sp(G_N)\right )\geq 2\right )\geq C_{q}'>0.
    \end{equation*}
    Hence, the strong separation must hold with $\alpha_2 \leq 0.$ This is a contradiction, as the definition of strong separation requires that $\alpha_1 < \alpha_2.$
\end{proof}

\begin{bibdiv}
\begin{biblist}
    \bib{Dja}{article}{
     title={Convergence of the spectral radius of a random matrix through its characteristic polynomial},
  author={Bordenave, C.},
  author={Chafa{\"\i}, D.},
  author={Garc{\'\i}a-Zelada, D.},
  journal={Probability Theory and Related Fields},
  pages={1--19},
  year={2022},
  publisher={Springer}
}

    \bib{DubRek}{article}{
    title={Dynamics of a rank-one multiplicative perturbation of a unitary matrix},
  author={Dubach, G.},
  author={Reker, J.},
  journal={Random Matrices: Theory and Applications},
  volume={13},
  number={02},
  pages={2450007},
  year={2024},
  publisher={World Scientific}
}
    \bib{Edmunds}{book}{
    title={Elliptic differential operators and spectral analysis},
  author={Edmunds, D. E.},
  author={Evans, W. D.},
  year={2018},
  series={Springer Monographs in Mathematics},
  publisher={Springer},
  address={New York}
    }
    \bib{ForIps}{article}{
    title={A generalisation of the relation between zeros of the complex Kac polynomial and eigenvalues of truncated unitary matrices},
  author={Forrester, P. J.},
  author={Ipsen, J. R.},
  journal={Probability Theory and Related Fields},
  volume={175},
  pages={833--847},
  year={2019},
  publisher={Springer}
}
    \bib{Fyo}{book}{
    title={Spectra of random matrices close to unitary and scattering theory for discrete-time systems},
  author={Fyodorov, Y. V.},
  book={},
  title={Spectra of random matrices close to unitary and scattering theory for discrete-time systems},
  series={in: Disordered and complex systems, AIP Conference Proceedings},
  volume={553},
  publisher={American Institute of Physics},
  address={Melville, NY},
  pages={191--196},
  year={2001}
}
   \bib{FKP}{article}{
    title={Zeros of conditional Gaussian analytic functions, random sub-unitary matrices and q-series}, 
    author={Fyodorov, Y. V.},
    author={Khoruzhenko, B. A.},
    author={Prellberg, T.},
      year={2024},
      eprint={2412.06086},
      url={https://arxiv.org/abs/2412.06086},
    }
    \bib{Sav}{book}{
    title={Resonance Scattering of Waves in Chaotic Systems},
  author={Fyodorov, Y. V.},
  author={Savin, D. V.},
  book={Resonance Scattering of Waves in Chaotic Systems},
  title={Resonance Scattering of Waves in Chaotic Systems},
  series={in: The Oxford Handbook of Random Matrix Theory (G. Akemann, J. Baik, and P.Di Francesco, eds.), Oxford Handbooks in Mathematics},
  volume={},
  publisher={Oxford University Press},
  address={Oxford},
  pages={},
  year={2011}
}
    \bib{Petz}{book}{
    title={The semicircle law, free random variables and entropy},
  author={Hiai, F.},
  author={Petz, D.},
  series={in: Mathematical Surveys and Monographs},
  number={77},
  year={2000},
  publisher={American Mathematical Society},
  address={Providence}
}
    \bib{GAF}{book}{
    title={Zeros of Gaussian analytic functions and determinantal point processes},
  author={Hough, J. B.},
  author={Krishnapur, M.},
  author={Peres, Y.},
  author={Vir{\'a}g, B.},
  series={in: University Lecture Series},
  volume={51},
  year={2009},
  publisher={American Mathematical Society},
  address={Providence}
}
    \bib{Jost}{book}{
    title={Partial differential equations},
    author={Jost, J.},
    year={2007},
    series={Graduate Texts in Mathematics},
    publisher={Springer},
    address={New York}
    }
    \bib{Kanti}{book}{
    title={Directional statistics},
    author={Jupp, P. E.},
    author={Mardia, K. V.},
  year={2009},
  publisher={John Wiley \& Sons}
  }
    \bib{Olav}{book}{
    title={Random measures, theory and applications},
  author={Kallenberg, O.},
  series={in: Probability Theory and Stochastic Modelling},
  volume={1},
  year={2017},
  publisher={Springer},
  address={New York}
}
    \bib{Krishnapur}{article}{
      title={From random matrices to random analytic functions},
  author={Krishnapur, M.},
  journal={The Annals of Probability},
  volume={37},
  number={1},
  pages={314},
  year={2009},
  publisher={Institute of Mathematical Statistics}
}
    \bib{Meckes}{book}{
      title={The random matrix theory of the classical compact groups},
  author={Meckes, E. S.},
  series={Cambridge Tracts in Mathematics},
  volume={218},
  year={2019},
  publisher={Cambridge University Press},
  address={Cambridge}
}
    \bib{Speicher}{book}{
      title={Lectures on the combinatorics of free probability},
  author={Nica, A.},
  author={Speicher, R.},
  series={in: London Mathematical Society Lecture Note Series},
  volume={13},
  year={2006},
  publisher={Cambridge University Press},
  address={Cambridge},
  
}
    \bib{Shirai}{article}{
     title={Limit theorems for random analytic functions and their zeros},
  author={Shirai, T.},
  journal={Bulletin of the American Mathematical Society},
  volume={32},
  pages={1--37},
  year={1995}
}
    \bib{Tao}{article}{
    title={Outliers in the spectrum of iid matrices with bounded rank perturbations},
  author={Tao, Terence},
  journal={Probability Theory and Related Fields},
  volume={155},
  number={1},
  pages={231--263},
  year={2013},
  publisher={Springer}
    
    }
   
\end{biblist}
\end{bibdiv}
\end{document}